\documentclass[10pt]{article}
\usepackage[top=1.5in, bottom=1.5in, left=1.35in, right=1.35in]{geometry}
\usepackage{amsxtra,amsmath,amscd,amssymb,amsthm}
\usepackage{mathrsfs}

\usepackage{longtable} 
\usepackage{verbatim}
\usepackage{xspace}
\usepackage{ifthen}
\usepackage{tabularx}

\usepackage{setspace,graphicx}
\usepackage{multirow,xcolor}
\usepackage{color, colortbl,enumerate}
\usepackage{tikz}
\usepackage{tikz-cd}
\usepackage{leftidx}
\usepackage[titletoc]{appendix}
\usepackage{cite}
\usepackage{floatrow}
\floatsetup[table]{capposition=bottom}
\newfloatcommand{capbtabbox}{table}[][\FBwidth]
\setlength{\unitlength}{1 in}

\newcommand{\quash}[1]{}  

\theoremstyle{plain}
\numberwithin{equation}{section}

\newtheorem{thm}[equation]{Theorem}
\newtheorem{prop}[equation]{Proposition}
\newtheorem{lm}[equation]{Lemma}
\newtheorem{defi}[equation]{Definition}

\newtheorem{rk}[equation]{Remark}

\newtheorem{ex}[equation]{Example}

\theoremstyle{remark}

\newcommand{\Gortz}{G\"{o}rtz}

\renewcommand{\phi}{\varphi}

\newcommand{\Isom}{{\rm Isom}}
\newcommand{\End}{{\rm End}}
\newcommand{\Hom}{{\rm Hom}}
\newcommand{\Aut}{{\rm Aut}}
\newcommand{\Res}{{\rm Res}}
\newcommand{\Gal}{{\rm Gal}}
\newcommand{\Spec}{{\rm Spec}}

\newcommand{\etale}{\' etale }
\newcommand{\Int}{{\rm Int}}

\newcommand{\Gr}{{\rm Gr}}
\newcommand{\Alg}{{\rm Alg}}

\newcommand{\Sets}{{\rm Sets}}
\newcommand{\Spin}{{\rm Spin}}
\newcommand{\GL}{{\rm GL}}

\newcommand{\SO}{{\rm SO}}

\newcommand{\SSO}{{\rm\bf SO}}

\newcommand{\SSpin}{{\rm\bf Spin}}

\newcommand{\ZZ}{\mathbb{Z}}

\newcommand{\DD}{\mathbb{D}}
\newcommand{\LL}{\mathbb{L}}

\newcommand{\calX}{\mathcal{X}}
\newcommand{\calO}{\mathcal{O}}
\newcommand{\calE}{\mathcal{E}}
\newcommand{\calF}{\mathcal{F}}

\newcommand{\calM}{\mathcal{M}}

\newcommand{\scrG}{\mathscr{G}}

\newcommand{\al}{\alpha}

\newcommand{\la}{\lambda}

\newcommand{\lx}{\leftidx}
\newcommand{\bb }{\langle}
\newcommand{\pp}{\rangle}
\newcommand{\llp}{(\!(}
\newcommand{\rlp}{)\!)}
\newcommand{\lp}{[\![}
\newcommand{\rp}{]\!]}

\makeatletter
\makeatother
\newcommand{\mathleft}{\@fleqntrue\@mathmargin0pt}
\newcommand{\mathcenter}{\@fleqnfalse}

\newcommand{\Addresses}{{
		\bigskip
		\footnotesize
		
	 \textsc{Dept. of Mathematics, Michigan State University,
		East Lansing, MI, 48824, USA}\par\nopagebreak
		\textit{E-mail address:} \texttt{zhaozhi8@msu.edu}
}}		
\begin{document}
\bibliographystyle{plain}

\title{Affine Grassmannians for Triality Groups}
\date{}
\author{Zhihao Zhao}
\maketitle

\begin{abstract}
We study affine Grassmannians for ramified triality groups. These groups 
are of type ${}^3D_4$, so they are forms of the orthogonal or the spin groups in $8$ variables. They can be given as automorphisms of certain twisted composition algebras obtained from the octonion algebra. Using these composition algebras, we give descriptions of the affine Grassmannians for these triality groups as functors classifying suitable lattices in a fixed space.
\end{abstract}

\tableofcontents
\section{Introduction}

Affine Grassmannians can be defined by loop groups. Let $k$ be a field, and let $G_0$ be an algebraic group over $\Spec(k)$. We consider the functor $LG_0$ on the category of $k$-algebras,
\[
R\mapsto LG_0(R)=G_0(R\llp t\rlp),
\] 
where $R\llp t\rlp$ is the Laurent power series with variable $t$. This functor is represented by an ind-scheme, called the loop group associated to $G_0$. We consider the positive loop group $L^+G_0$, which is the functor on the category of $k$-algebras,
\[
R\mapsto L^+G_0(R)=G_0(R\lp t\rp).
\]
Then $L^+G_0\subset LG_0$ is a subgroup functor, and the fpqc-quotient $\Gr_{G_0}=LG_0/L^+G_0$ is by definition the affine Grassmannian. The fpqc-sheaf $\Gr_{G_0}$ is also represented by an ind-scheme. We refer \cite{BL}, \cite{BD} for important results on the structure of loop groups and associated affine Grassmannians. These results have applications to the theory of reduction and local models of Shimura varieties, \cite{G1},\cite{Go2},\cite{PZ}.

In \cite{PR3}, Pappas and Rapoport developed a similar theory of twisted loop groups and of their associated affine Grassmannians: Let $G$ be a linear algebraic group over $k\llp t\rlp$. Consider the twisted loop group $LG$, which is also the ind-scheme representing the functor:
\[
LG: R\rightarrow G(R\llp t\rlp)
\]
for any $k$-algebra. Since $R\llp t\rlp$ is a $k\llp t\rlp$-algebra, the definition makes sense. Notice that when $G=G_0\otimes_k k\llp t\rlp$, we recover the previous definition in the untwisted case. To define the positive loop group, we assume $G$ is reductive, and choose a parahoric subgroup $\scrG$ of $G$. This is a smooth group scheme over $k\lp t\rp$ with $\scrG\otimes_{k\lp t\rp}k\llp t\rlp=G$. Then the positive loop group $L^+\scrG$ is an infinite-dimensional affine group scheme $L^+\scrG$ representing the functor:
\[
L^+\scrG: R\rightarrow \scrG(R\lp t\rp)
\]
The fpqc-quotient $\Gr_\scrG=LG/L^+\scrG$ is represented by an ind-scheme over $k$, which we call the affine Grassmannian associated to $G$. In this paper, we take the twisted loop group
to be a ramified triality group and study the corresponding affine Grassmannian. Our main goal (see Theorem \ref{thm 11} below) is an explicit description of the triality affine Grassmannian in terms of lattices with extra structure. Lusztig [10] first showed that affine Grassmannians for simple Lie algebras can be described in terms of certain orders, which are lattices closed under the Lie bracket. Here we aim for an explicit description in terms of lattices in the standard representation
which is more in line with such descriptions known for classical groups. For example, Pappas and Rapoport gave such descriptions for affine Grassmannians and affine flag varieties for unitary groups in \cite{PR3} using lattices (or lattice chains) which are self-dual for a hermitian form.  See also work of \Gortz \cite{Go2} and of Smithling \cite{SM2} for the symplectic and the split orthogonal groups, respectively. It turns out that the case of the ramified triality group, which we consider here, is considerably more complicated.

What are triality groups? Let $G_0$ be an adjoint Chevalley group of type $D_4$ over a field $F_0$. Consider the Dynkin diagram:
\[
\begin{tikzcd}[arrows=-]
&&\bullet\arrow[dl]\\
D_4: \quad \bullet \arrow[r] & \bullet \arrow[dr] &\\
 &&\bullet
 \end{tikzcd}
\]
The Dynkin diagram of type $D_4$ has a symmetry not shared by other Dynkin diagrams: it admits automorphisms of order 3. 
 Since the automorphism of the Dynkin diagram of type $D_4$ is isomorphic to the symmetric group $S_3$, there is a split exact sequence of algebraic groups:
\[
1\rightarrow G_0 \rightarrow \Aut(G_0)\stackrel{f}{\rightarrow} S_3\rightarrow 1.
\]
Thus, $G_0$ admits outer automorphisms of order 3, which we call trialitarian automorphisms. The fixed elements of $G_0$ under such an outer automorphism, define groups of type $G_2$:
\[
G_2\quad \bullet\Lleftarrow \bullet
\]
Consider the Galois cohomology set $H^1(F_0,\Aut(G_0)):= H^1(\Gamma_0, \Aut(G_0))$, where $\Gamma_0$ is the absolute Galois group $\Gal(F_{0,sep}/F_0)$. Adjoint algebraic groups of type $D_4$ over $F_0$ are classified by $H^1(F,\Aut(G_0))$ (29.B, \cite{KMRT}), and the map induced by $f$ in cohomology $f^1: H^1(F_0, \Aut(G_0))\rightarrow H^1(F_0, S_3)$ associates to $G_0$ of type $D_4$ the isomorphism class of a cubic \etale $F_0$-algebra $F$, see \cite{KMRT}. The possibilities of $F$ are summarized as follows:
\[
\begin{tabular}{c|c}
$F$ & type $G_0$  \\
\hline
$F_0 \times F_0 \times F_0$ & $\leftidx^1 D_4$  \\
$F_0 \times \Delta$ & $\leftidx^2 D_4$  \\
Galois field ext. & $\leftidx^3 D_4$  \\
non-Galois field ext. & $\leftidx^6 D_4$
\end{tabular}
\]
The group $G_0$ is said to be of type $\leftidx^1 D_4$ if $F$ is split, of type $\leftidx^2 D_4$ if $F \cong F_0 \times \Delta$ for some quadratic separable field extension $\Delta / F_0$, of type $\leftidx^3 D_4$ if $F$ is a cyclic field extension over $F_0$, and of type $\leftidx^6 D_4$ if $F$ is a non-cyclic field extension. In our paper, we consider the $\leftidx^3 D_4$ case and call the corresponding $G_0$ the triality group. 

These triality groups are often studied by composition algebras. By composition algebras, we mean algebras (not necessarily associative) with a nonsingular quadratic form $q$ such that $q(x\cdot y)=q(x)q(y)$ for all $x$, $y$ in this algebra. We give a review of symmetric composition algebras and normal twisted composition algebras in \S 2. Composition algebras can be used to describe exceptional groups. For example, Springer shows the automorphism of an octonion algebra is of type $G_2$ (\S2.3, \cite{Sp}). Here the octonion algebra is an 8-dimensional composition algebra. We can view this automorphism group as the fixed subgroup of a spin group of an octonion algebra under outer automorphisms (Proposition 35.9, \cite{KMRT}). In \S 3, we extend this result and show that the subgroup of a spin group of a normal twisted composition algebra, which is fixed under outer automorphisms, is of type $\leftidx^3 D_4$. This will be our main tool to study affine Grassmannians for triality groups.

Let $k$ be a field with characteristic char$(k)\neq 2,3$. We assume the cubic primitive root $\xi$ is in $k$. We set $F_0=k\llp t\rlp$ (resp. $F=k\llp u\rlp$) the ring of Laurent power series, with ring of integers $k\lp t\rp$ (resp. $k\lp u \rp$). Set $u^3=t$ so that $F/F_0$ is a cubic Galois extension with $\Gal(F/F_0)=\bb\rho \pp\cong A_3$, where $\rho$ acts on $u$ by $\rho(u)=\xi u$. In \S 2, we define the normal twisted composition algebra $(V,*)$ over $F$. Here $(V,*)$ is a 8-dimensional vector space with an $F_0$-bilinear product $*$ and a nonsingular quadratic form $q$ satisfying certain properties (see Definition \ref{def 21}).  We also fix a finitely generated  projective $k\lp u\rp$-module $\LL$ in $V$, which we call the standard lattice in $V$.  Denote by $\bb\ ,\ \pp $ the bilinear form associated to $q$. We show that the spin group $\SSpin(V,*)$ over $F$ has an outer automorphism, and the subgroup of $\Res_{F/F_0}\SSpin(V,*)$, which is fixed under the outer automorphism, is the triality group $G$ we are interested in, i.e.,
\[
G=\Res_{F/F_0}\SSpin(V,*)^{A_3}.
\]
We now choose the parahoric group scheme $\scrG$ over $\Spec(k\lp t\rp)$ given by the lattice $\LL$. This is a ``special" parahoric subgroup in the sense of Bruhat-Tits theory. Recall the generic fiber $\scrG_\eta$ of $\scrG$ is equal to $G$.  The quotient fpqc sheaf  $L\scrG_\eta/L^+\scrG$  is by definition the affine Grassmannian for the triality group over $\Spec(k)$. Our main theorem is:
\begin{thm}\label{thm 11}
There is an $L\scrG_{\eta}$-equivariant isomorphism 
\[
L\scrG_{\eta}/L^+\mathscr{G}\simeq \mathscr{F}
\]	
where the functor $\mathscr{F}$ sends a $k$-algebra $R$ to the set of finitely generated projective $R\lp u\rp $-modules $L$ (i.e., $R\lp u\rp $-lattices) of $V\otimes_{k}R\cong R\llp u\rlp^8$, such that
\begin{itemize}
	\item [(1)] $L$ is self dual under the bilinear form $\bb\ ,\ \pp$, i.e., $L\simeq \Hom_{R\lp u\rp}(L, R\lp u\rp )$.
	\item [(2)] $L$ is closed under multiplication, $L*L\subset L$.
	\item [(3)]  There exists $a\in L$, such that $q(a)=0$,  $\bb  a*a,a\pp=1$.

	\item [(4)] For $a$ as in (3), let $e=a+a*a$. Then, we have $\overline{e*x}=-\bar{x}=\overline{x*e}$ for any $\bar{x}$ satisfying $\bb  \bar{x},\bar{e}\pp =0$. (Here, $\bar{x}$ is the image of $x$ under the canonical map $L\rightarrow L/uL$.)\end{itemize}
\end{thm}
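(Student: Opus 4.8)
The plan is to write down an explicit $L\scrG_\eta$-equivariant morphism $\Gr_\scrG=L\scrG_\eta/L^+\scrG\to\scrF$ and then to upgrade it to an isomorphism by proving it is a monomorphism which is surjective on geometric points. To construct the morphism, recall from \S3 that $G=\Res_{F/F_0}\SSpin(V,*)^{A_3}$ acts on the $8$-dimensional standard representation $V$ preserving $q$ (hence $\bb\ ,\ \pp$) and the product $*$; concretely $G(R\llp t\rlp)$ sits inside $\SSpin(V,*)(R\llp u\rlp)$ and acts $R\llp u\rlp$-linearly on $V\otimes_k R\cong R\llp u\rlp^8$. Given $g\in L\scrG_\eta(R)=G(R\llp t\rlp)$, set $L=g\cdot(\LL\otimes_{k\lp u\rp}R\lp u\rp)$. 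Since $\LL$ is self-dual and $*$-closed (\S2) and $g$ preserves $\bb\ ,\ \pp$ and $*$, the lattice $L$ satisfies (1) and (2). The standard lattice contains the distinguished isotropic element $a_0$ of the octonionic model with $\bb a_0*a_0,a_0\pp=1$, and $e_0=a_0+a_0*a_0$ is the para-unit, satisfying (4) via the identity $e_0*x=-x$ for $x\perp e_0$; translating by $g$, which induces an isomorphism $\LL/u\LL\xrightarrow{\ \sim\ }L/uL$ respecting $q$ and $*$, the element $a=ga_0$ witnesses (3) and (4) for $L$. Thus $g\mapsto L$ defines $L\scrG_\eta(R)\to\scrF(R)$, manifestly $L\scrG_\eta$-equivariant and invariant under right translation by $L^+\scrG(R)=\scrG(R\lp t\rp)$ (which stabilizes $\LL\otimes R\lp u\rp$ with all its structure), so it descends to $\Gr_\scrG\to\scrF$.

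For injectivity, suppose $g\LL=g'\LL$ as lattices-with-structure over $R\lp u\rp$; then $h=g^{-1}g'$ lies in the scheme-theoretic stabilizer of $(\LL,\bb\ ,\ \pp,*)$ inside $G$. The needed input — which rests on the Bruhat--Tits description of $\scrG$ as the parahoric attached to $\LL$, developed in the group-theoretic part of the paper — is that this stabilizer equals $\scrG$ as a group scheme over $k\lp t\rp$; in particular the full stabilizer is already connected. Granting this, $h\in\scrG(R\lp t\rp)=L^+\scrG(R)$, so $\Gr_\scrG\to\scrF$ is a monomorphism. Since $\scrG$ is parahoric, $\Gr_\scrG$ is reduced and ind-projective over $k$; on the other hand $\scrF$ is a subfunctor of the affine Grassmannian of $\GL_8$ over $k\lp u\rp$ — equivalently of $\Gr_{\GL_{24}}$ over $k$ via $R\llp u\rlp\cong R\llp t\rlp^{\oplus 3}$ — cut out by the closed conditions (1), (2), (4) and the (locally closed) condition (3); once one checks $\scrF$ is represented by an ind-scheme, the monomorphism from the ind-proper source $\Gr_\scrG$ becomes a closed immersion.

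It then remains to show surjectivity on geometric points: every lattice $L$ over $K\lp u\rp$, with $K$ algebraically closed over $k$, satisfying (1)--(4) is $G(K\llp u\rlp)$-conjugate to $\LL$. From (3), the axioms of Definition \ref{def 21} give that $a$ and $a*a$ are isotropic with $\bb a*a,a\pp=1$, so they span a unimodular rank-$2$ sublattice and $L=H\perp H^{\perp}$ with $H^{\perp}$ unimodular of rank $6$; the element $e=a+a*a\in H$ has $q(e)=1$, and (4) says multiplication by $\bar e$ is $-1$ on $\bar e^{\perp}$ modulo $u$. Passing to the reduction, $\bar L=L/uL$ is a symmetric composition $K$-algebra with product $\bar *$ (the triality twist $\rho$ acts trivially on the residue field), and $\bar e$ is a para-unit; over the algebraically closed field $K$ an $8$-dimensional symmetric composition algebra carrying a para-unit is necessarily split of para-Hurwitz (para-octonion) type, so $(\bar L,\bar *,\bar e)\cong(\bar\LL,\bar *,\bar e_0)$, and the relevant automorphism group is $\Aut(\bar\LL,\bar *)$, of type $G_2$, which is exactly the reductive quotient of the special parahoric $\scrG$ at the vertex of $\LL$. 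Hence the isomorphism $\bar\LL\cong\bar L$ of structured reductions, together with the tautological isomorphism over the generic point $L\otimes K\llp u\rlp=V$, shows by a standard infinitesimal lifting argument (smoothness of $\scrG$ and completeness of $K\lp u\rp$) that $(\LL,q,*)\cong(L,q,*)$ over $K\lp u\rp$, i.e. $L=g\LL$ for some $g\in G(K\llp u\rlp)$. Finally, a closed immersion of ind-schemes surjective on geometric points with reduced source and reduced target is an isomorphism, which completes the proof.

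The main obstacle I expect is precisely this geometric-point surjectivity together with the monomorphism input: proving that (1)--(4) carve out exactly the $G(K\llp u\rlp)$-orbit of $\LL$ and not a larger union of orbits of self-dual $*$-closed lattices. This needs the explicit dictionary between such lattices and vertices of the Bruhat--Tits building of the ramified ${}^3D_4$-group — identifying the special vertices and their reductive quotients (one of type $G_2$, one of type $A_2$) and matching the $G_2$-one with $\LL$ — and a Peirce-type decomposition of $(\bar L,\bar *)$ relative to $\bar e$ (splitting off the hyperbolic plane $\langle\bar a,\bar a*\bar a\rangle$ and analyzing the rank-$6$ orthogonal complement) to show that (3)--(4) force the para-octonion isomorphism type and eliminate the para-Okubo alternative. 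The ramification $u^3=t$ and the triality twist make this bookkeeping substantially heavier than in the split orthogonal and symplectic cases treated by Smithling \cite{SM2} and \Gortz\ \cite{Go2}.
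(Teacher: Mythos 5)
Your overall scaffolding (build the map $g\mapsto g\LL$, show it is a monomorphism, show surjectivity) is sound, and part (i) matches the paper. But the two steps you yourself flag as the ``main obstacle'' are precisely where your argument is not a proof, and the way you propose to fill them is not the way the paper (or any way I can see) makes them work.

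\textbf{Surjectivity.} Your sketch is: reduce mod $u$, identify $(\bar L,\bar *)$ with the split para-Cayley algebra, and then lift this isomorphism to $K\lp u\rp$ by ``a standard infinitesimal lifting argument (smoothness of $\scrG$ and completeness of $K\lp u\rp$).'' This is where the real content lives, and the lifting you invoke is not standard here. The natural object to lift along is the Isom-scheme $T=\underline{\mathrm{Isom}}\bigl((L,q,*),(\LL,q,*)\bigr)$ over $K\lp t\rp$; to run Hensel you would need $T$ to be \emph{smooth} (equivalently, a $\scrG$-torsor) over $K\lp t\rp$ near the closed point, and to know $T$ is a torsor you need $T$ to be fppf-locally nonempty over $K\lp t\rp$ --- i.e.\ essentially the conclusion you want. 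Knowing $T(\kappa)\neq\emptyset$ (your reduction step) only gives smoothness of the special fiber after translation; it does not give flatness of $T$ over $K\lp t\rp$, so it does not license lifting. The paper gets around this by working \emph{explicitly}: it decomposes $L=R\lp u\rp a\oplus R\lp u\rp(a*a)\oplus L_1\oplus L_2$ with $L_i=L_0*f_i$, studies the $\rho$-semilinear operators $t_i(x)=x*f_i$ on the rank-$3$ pieces (Lemmas~\ref{lm 513}, \ref{lm 515}), uses condition~(4) precisely to show $\bar t_i=-\mathrm{id}$ on the reduction (Proposition~\ref{prop 6}, Lemma~\ref{lm 517}), and then runs a \emph{non-abelian Galois cohomology} computation: the $\rho$-cocycle $a_\rho=-A_1$ in $\GL_3(R\lp u\rp)$ is trivial because the kernel $U=\ker\bigl(\GL_3(R\lp u\rp)\to\GL_3(\kappa)\bigr)$ is pro-unipotent in characteristic $\neq 3$, so $H^1(\Gamma,U)=1$ and a cocycle that is trivial mod $u$ is trivial. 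This step --- untwisting the $\rho$-semilinear $t_i$ into an honest $(-1)$-eigenbasis --- is exactly what your ``lifting argument'' leaves out, and it has no substitute via generic smoothness of $\scrG$. Moreover, the paper establishes (ii) over an arbitrary local henselian $R$ (not just geometric points), which is what the Pappas--Rapoport reduction actually requires.

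\textbf{Framework.} Your global reduction (closed immersion of reduced ind-schemes surjective on geometric points is an isomorphism) replaces the paper's cleaner ``check on local henselian $R$'' reduction with a route that needs additional, unestablished inputs: representability of $\scrF$ as an ind-scheme, reducedness of $\scrF$, and ind-projectivity/reducedness of $\Gr_{\scrG}$ for this ramified group. These are not free and are not proved in the paper. The injectivity step (stabilizer of $\LL$-with-structure equals $\scrG$) is fine and is the content of the cited Bruhat--Tits parahoric description, but it should be stated as a citation rather than ``granting this.'' In short: the outline is reasonable, but the surjectivity argument as written does not go through, and the paper's actual mechanism --- the Peirce-type lattice decomposition plus vanishing of $H^1(\Gamma,U)$ for the reduction kernel --- is the missing idea.
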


In particular, it gives a bijection between $k$-points in the affine Grassmannian for the triality group and a certain set of $k\lp u\rp$-lattices in $V$ that satisfy some special conditions. The  proof of this theorem is inspired by the construction of twisted composition algebras by Springer in \cite[\S 4.5]{Sp}: Observe that every normal twisted composition algebra (with isotropic quadratic form) $(V,*)$ contains a special hyperplane $V_0:=F f_1\oplus F f_2$, where the sum $f_1+f_2$ is the para-unit. 
Set $V_1:=V_0*f_1$ and $V_2:=V_0*f_2$. We can decompose $V$ as $V=V_0\oplus V_1\oplus V_2$. 
In \S5, we use similar ideas and extend Springer's results to suitable lattices: To decompose a lattice $L$ in $V$, we need $L$ is closed under multiplication, self dual, and contains the elements $a, a*a$ that play similar roles as $f_1, f_2$ in $V$ (see Theorem \ref{thm 11} for details).  When $L$ satisfies the conditions (1)-(4) in Theorem \ref{thm 11}, we can decompose $L$  as $L=R\lp u\rp a\oplus R\lp u\rp(a*a) \oplus L_1\oplus L_2$. Furthermore, there exist a basis of $L_1$ (resp. $L_2$) such that the multiplication table of $L$ is the same as the standard lattice $\LL$. Thus, we can define a morphism $g$ in $LG$ such that $L=g(\LL)$.

The triality group $G$ we consider in this paper is simple and simply connected as a form of the spin group. We can also consider variants $G'$ with the same adjoint group $G'_{\rm ad}\simeq G_{\rm ad}$ and use similar ideas. These variants and their associated (global) affine Grassmannians are useful for describing corresponding local models,  in the sense of \cite{PZ}. Indeed, by fixing a coweight $\mu$ of $G'$, we can obtain a   description of the corresponding local model as classifying lattices whose distance from the standard lattice is ``bounded by $\mu$". We will describe this in another work.  

The results in this paper are part of my thesis at Michigan State University. I would like to thank my advisor G. Pappas for his useful suggestions and patient help. 
\section{Twisted composition algebras}

Twisted composition algebras were introduced by Springer in his 1963 lecture notes \cite{Sp}, to get a new description of Albert algebras. We recall the definition from \cite{Sp} and \cite{KT}. Let $F_0$ be a field with char($F_0)\neq 2,3$, and let $F$ be a separable cubic field extension of $F_0$. The normal closure of $F$ over $F_0$ is $F'=F(d)$, where $d$ satisfies a separable quadratic equation over $F_0$ (see Theorem 4.13, \cite{JA1}). We can take $d=\sqrt D$, the square root of the discriminant $D$ of $F$ over $F_0$. We set $F_0'=F_0(d)$. So either $F$ is the Galois extension of $F_0$ with cyclic Galois group of order 3, and then $F'=F, F_0'=F_0$; or $F'$ and $F_0'$ are quadratic extensions of $F$ and $F_0$, respectively, and $F'$ is the Galois extension of $F_0'$. We will focus on the case that the separable cubic extension $F/F_0$ is also normal, and call algebras of this type ``normal twisted composition algebras".

Let $F/F_0$ be a cubic Galois extension. We set $\Gamma=\Gal(F/F_0)$, with $\rho$ the generator of $\Gamma$. Set $\theta=\rho^2$, then $\Gamma=\{1,\rho,\theta\}$.

\begin{defi}\label{def 21}
A normal twisted composition algebra (of dimension 8) is a 5-tuple $(A,F,q,\rho,*)$, where $A$ is a vector space of dimension 8 over $F$ with a nonsingular quadratic form $q$, and associated bilinear form $\bb  ,\pp $. We have an $F_0$-bilinear product $*: A\times A\rightarrow A$ on $F$ with the following properties:
\begin{itemize}
	\item [(1)]  The product $x*y$ is $\rho$-linear in $x$ and $\theta$-linear in y, that is:
\[
(\lambda x)*y=\rho(\lambda)(x*y),~~x*(\lambda y)=\theta(\lambda)(x*y),
\]
	\item [(2)] We have $q(x*y)=\rho(q(x))\theta(q(y))$,
	\item [(3)] We have $\bb  x*y,z\pp =\rho(\bb  y*z,x\pp )=\theta(\bb  z*x,y\pp )$
\end{itemize}
for all $x,y,z\in A$,and $\lambda\in F$.
\end{defi}

Let $A'=(A',F,q',\rho',*')$ be another normal twisted composition algebra. A similitude $A\rightarrow A'$ is defined to be an $F$-linear isomorphism $g:A\rightarrow A'$, for which there exists $\lambda\in F^*$, such that 
\[
q'(g(x))=\rho(\lambda)\theta(\lambda)q(x), ~~g(x)*'g(y)=\lambda g(x*y),
\]
for all $x,y\in A$. We denote by $A'=A_\la$. The scalar $\lambda$ is called the multiplier of the similitude. Similitudes with multiplier 1 are called isometries. We will use the following lemmas which their proofs can be found in \cite[Lemma 4.1.2, Lemma 4.1.3]{Sp}.

\begin{lm}\label{lm 232}
 We have
\begin{itemize}
	\item [(1)] $\bb  x*z, y*z\pp =\rho(\bb  x,y\pp )\theta(q(z))$,
	\item [(2)] $\bb  z*x, z*y\pp =\theta(\bb  x,y\pp )\rho(q(z))$,
	\item [(3)] $\bb  x*z,y*w,\pp + \bb  x*w,y*z\pp =\rho(\bb  x,y\pp )\theta(\bb  z,w\pp )$,
\end{itemize}
for all $x,y,z,w\in A$.
\end{lm}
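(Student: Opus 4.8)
\noindent\emph{Proof idea.} All three formulas are polarization identities derived from axiom~(2) of Definition~\ref{def 21}, namely $q(x*y)=\rho(q(x))\theta(q(y))$; accordingly the plan is purely computational. I would repeatedly expand $q$ of a suitable sum using the polarization identity $q(a+b)=q(a)+q(b)+\bb a,b\pp$ (valid since $\mathrm{char}(F_0)\neq 2$) and then cancel the ``diagonal'' terms. The only things to keep track of are that, although $*$ is merely $F_0$-bilinear, it is in particular additive in each variable --- and additivity is all the argument uses --- and that $\rho$ and $\theta=\rho^2$ are field automorphisms, hence additive and multiplicative. Note that axiom~(3) is not needed here.

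For~(1), I would apply axiom~(2) to the vector $(x+y)*z=x*z+y*z$. On the one hand,
\[
q\big((x+y)*z\big)=\rho\big(q(x+y)\big)\,\theta\big(q(z)\big)=\Big(\rho(q(x))+\rho(q(y))+\rho(\bb x,y\pp)\Big)\theta(q(z)),
\]
and on the other hand, expanding with the polarization identity and then applying axiom~(2) to $q(x*z)$ and to $q(y*z)$,
\[
q(x*z+y*z)=q(x*z)+q(y*z)+\bb x*z,y*z\pp=\rho(q(x))\theta(q(z))+\rho(q(y))\theta(q(z))+\bb x*z,y*z\pp.
\]
Equating the two expressions and cancelling the common terms gives~(1). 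Identity~(2) would be obtained in precisely the same way, polarizing the \emph{second} argument instead, i.e.\ by applying axiom~(2) to $z*(x+y)=z*x+z*y$.

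For~(3), I would polarize the already-proved identity~(1) in the variable $z$, replacing $z$ by $z+w$. Its left-hand side then expands as
\[
\bb x*z+x*w,\ y*z+y*w\pp=\bb x*z,y*z\pp+\bb x*w,y*w\pp+\bb x*z,y*w\pp+\bb x*w,y*z\pp,
\]
and by~(1) the first two summands equal $\rho(\bb x,y\pp)\theta(q(z))$ and $\rho(\bb x,y\pp)\theta(q(w))$; meanwhile its right-hand side is $\rho(\bb x,y\pp)\theta(q(z+w))=\rho(\bb x,y\pp)\big(\theta(q(z))+\theta(q(w))+\theta(\bb z,w\pp)\big)$. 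Cancelling yields~(3).

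I do not anticipate any genuine obstacle: the verification is entirely formal, the one thing requiring care being to keep straight which argument of $*$ carries the $\rho$-twist and which carries the $\theta$-twist. (Alternatively one may simply quote \cite[Lemma~4.1.2, Lemma~4.1.3]{Sp} as the statement itself does, but the polarization argument above makes the lemma self-contained.)
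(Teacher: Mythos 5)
Your proof is correct. Since the paper itself gives no proof but simply cites Springer's notes, there is no ``paper proof'' to compare against; what you have written is the standard linearization (polarization) argument that those references also use, and it is exactly the right approach. The only point worth keeping explicit, which you do note, is that the product $*$ is $F_0$-bilinear hence additive in each slot, which is all the polarization step needs; the $\rho$-/$\theta$-semilinearity only enters through applying axiom~(2) of Definition~\ref{def 21} and through the fact that $\rho,\theta$ are ring automorphisms and hence commute with the polarization $q(a+b)=q(a)+q(b)+\bb a,b\pp$. Parts~(1) and~(2) come from linearizing $q(x*y)=\rho(q(x))\theta(q(y))$ in the first and second slot respectively, and~(3) then comes from linearizing~(1) in $z$, exactly as you describe.
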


\begin{lm}\label{lm 233}
We have
\begin{itemize}
	\item [(1)] $x*(y*x)=\rho(q(x))y$,\quad $(x*y)*x=\theta(q(x))y$,
	\item [(2)] $x*(y*z)+z*(y*x)=\rho(\bb  x,z\pp )y$,\quad $(x*y)*z+(z*y)*x=\theta(\bb  x,z\pp )y$,
	\item [(3)] $(x*x)*(x*x)=T(x)x-q(x)(x*x)$, where $T(x):=\bb  x*x,x\pp  \in F_0$,
\end{itemize}
for all $x,y,z\in A$.
\end{lm}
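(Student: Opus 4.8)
\textbf{Proof plan for Lemma \ref{lm 233}.}

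The plan is to derive all three parts purely formally from the axioms in Definition \ref{def 21} together with Lemma \ref{lm 232}, using the nonsingularity of $\bb\ ,\ \pp$ to upgrade bilinear-form identities to element identities. The basic principle throughout: to prove $u = v$ in $A$, it suffices to show $\bb u, w\pp = \bb v, w\pp$ for all $w \in A$, since $q$ is nonsingular; and each such scalar pairing can be rewritten using the cyclic symmetry (3) of Definition \ref{def 21} and the ``polarized'' identities of Lemma \ref{lm 232}.

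For part (1), to prove $x*(y*x) = \rho(q(x))\,y$, I would pair both sides against an arbitrary $z$. On the left, $\bb x*(y*x), z\pp = \theta\big(\bb (y*x)*z, x\pp\big)$ by Definition \ref{def 21}(3). Now $(y*x)*z$ should be expanded: applying Definition \ref{def 21}(3) again, $\bb (y*x)*z, x\pp = \rho\big(\bb x*(y*x)\ldots\big)$ — more efficiently, I would instead start from Lemma \ref{lm 232}(2), namely $\bb z*x, z*y\pp = \theta(\bb x,y\pp)\rho(q(z))$, and specialize cleverly. Actually the cleanest route: from Lemma \ref{lm 232}(1), $\bb x*z, y*z\pp = \rho(\bb x,y\pp)\theta(q(z))$; set $z = x$ and rename to get $\bb x*x, y*x\pp$-type relations, then use the cyclic identity to move the ``$*x$'' around. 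The two identities in (1) are mirror images (swap the roles of the $\rho$- and $\theta$-slots), so proving one gives the other by the left/right symmetry of the axioms. Part (1) is the key special case $x=z$ of the more general polarized identity in part (2).

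For part (2), the identity $x*(y*z) + z*(y*x) = \rho(\bb x,z\pp)\,y$ is exactly the polarization of part (1) in the variable $x$: replace $x$ by $x + z$ in $x*(y*x) = \rho(q(x))y$, expand using $\rho$-linearity of $*$ in its first argument and $\theta$-linearity in its second, subtract the two instances of part (1) for $x$ alone and $z$ alone, and use $q(x+z) - q(x) - q(z) = \bb x,z\pp$. The mirror identity follows the same way from the second identity of part (1). For part (3), I would pair $(x*x)*(x*x)$ against an arbitrary $w$: by Definition \ref{def 21}(3), $\bb (x*x)*(x*x), w\pp = \rho\big(\bb (x*x)*w, x*x\pp\big)$, and then expand $(x*x)*w$ — or, better, use Lemma \ref{lm 232}(3) with suitable substitutions to express $\bb (x*x)*(x*x), w\pp$ in terms of $\bb x*x, x\pp = T(x)$ and $q(x)$; comparing with $\bb T(x)x - q(x)(x*x), w\pp$ and invoking nonsingularity finishes it. Checking $T(x) \in F_0$ amounts to verifying $\rho(T(x)) = T(x)$, which follows by applying $\rho$ to $\bb x*x, x\pp$ and using Definition \ref{def 21}(3) to cycle the factors back.

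The main obstacle is purely bookkeeping: keeping the Galois twists $\rho, \theta = \rho^2$ straight when repeatedly applying the cyclic identity (each application rotates the labels and applies $\rho$), and choosing substitutions in Lemma \ref{lm 232}(3) that collapse to the desired right-hand sides rather than producing extra cross terms. There is no conceptual difficulty — everything is forced by nonsingularity of $q$ and the three axioms — but care is needed so that, e.g., in part (3) the cubic relation $(x*x)*(x*x) = T(x)x - q(x)(x*x)$ emerges with the correct signs and without residual $\rho$'s (it should land in $A$ with honest scalars, which is consistent only because $T(x) \in F_0$ and, as one checks, $q(x*x) = q(x)\rho(q(x))\theta(q(x)) \cdot$(something) reduces appropriately). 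Since these are exactly Lemma 4.1.2 and Lemma 4.1.3 of \cite{Sp}, I would also simply cite Springer for the details and present only the pairing-against-$w$ skeleton above.
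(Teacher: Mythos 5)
Your plan is essentially correct, but note that the paper itself contains no proof of this lemma: it simply cites Springer \cite[Lemma 4.1.2, Lemma 4.1.3]{Sp}, so the citation you offer at the end is exactly the paper's own treatment, and your sketch is extra detail rather than a divergence. The skeleton you describe does work: for (1), pairing against an arbitrary $z$ and using Definition \ref{def 21}(3) followed by Lemma \ref{lm 232}(1) gives $\bb x*(y*x),z\pp=\theta(\bb z*x,y*x\pp)=\theta\bigl(\rho(\bb z,y\pp)\theta(q(x))\bigr)=\rho(q(x))\bb y,z\pp$, and nondegeneracy finishes it (the mirror identity uses Lemma \ref{lm 232}(2) in the same way) — this is cleaner than the ``set $z=x$ and rename'' detour you gesture at. Part (2) is indeed just polarization of (1) in $x$, using only biadditivity of $*$ (its $F_0$-bilinearity) and $q(x+z)-q(x)-q(z)=\bb x,z\pp$. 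For (3), your pairing-against-$w$ route can be made to work, but the least error-prone path is to deduce it from (1) and (2) already proved: the second identity of (2) with the substitution $(x*x,\,x,\,x)$ gives $(x*x)*(x*x)+((x*x)*x)*x=\theta(T(x))x$, while (1) gives $(x*x)*x=\theta(q(x))x$ and hence $((x*x)*x)*x=q(x)(x*x)$; combined with $T(x)\in F_0$ (which, as you say, follows from the cyclic axiom applied to $\bb x*x,x\pp$), this yields the stated cubic relation with the signs forced. Your parenthetical about $q(x*x)=q(x)\rho(q(x))\theta(q(x))\cdot(\text{something})$ is garbled — axiom (2) gives $q(x*x)=\rho(q(x))\theta(q(x))$ — but it plays no role in the argument.
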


\begin{rk}{\rm 
View a normal twisted composition algebra $(A,F,q,\rho,*)$ as an 8-dim quadratic space. We can discuss isotropic subspaces in $A$.	 An element $x\in A$ is called isotropic if $q(x)=0$. A subspace $W$ of $A$ is said to be isotropic if $q(x)=0$ for all $x\in W$. A maximal isotropic subspace is an isotropic subspace with the maximal dimension. All maximal isotropic subspaces have the same dimension, which is called the Witt index of $q$. The index is at most equal to half dimension of the vector space, which in our case, is 4.
}\end{rk}

It turns out that a normal twisted composition can be obtained by scalar extension from a symmetric composition algebra. Recall from \cite{KMRT}, \S34 that a symmetric composition algebra (of dimension 8) is a  triple $(S,\star,q)$, where $(S,q)$ is an 8-dimensional $F_0$-quadratic space (with nondegenerate bilinear form $\bb,\pp$) and $\star: S \times S \rightarrow S$ is a bilinear map such that for all $x, y, z \in S$,
\[
q(x\star y)=q(x)q(y),\quad \bb x\star y, z\pp=\bb x,y\star z\pp.
\]
By \cite[Lemma 34.1]{KMRT}, the above definition is equivalent to
\[
x\star(y\star x)=q(x)y=(x\star y)\star x, \quad\text{for all}~x,y\in S.
\]
Given a symmetric composition $(S,\star,q)$ over $F_0$. We can get a normal twisted composition algebra $\tilde{S}=S\otimes (F,\rho)$ as follows:
\[
S\otimes (F,\rho):=(S\otimes_{F_0}F, F, q_F, \rho, *)
\]
where $q_F$ is the scalar extension of $q$ to $F$ and $*$ is defined by extending $\star$ linearly to $S\otimes_{F_0} F$ and setting
\[
x*y=(id_S\otimes \rho) (x)\star (id_S\otimes \theta)(y) ~\text{~for all}~x,y\in S\otimes_{F_0} F
\]
(see \cite{KT}). A normal twisted composition algebra $A$ over $F$ is said to be reduced if there exist a symmetric composition algebra $S$ over $F_0$ and $\lambda\in F^*$ such that $A$ is isomorphic to  $\tilde{S}_{\lambda}$.

\begin{ex}
{\rm The main tool that we use in this paper is the normal twisted composition algebra obtained from the para-Cayley algebra. Consider the Cayley (octonion) algebra $(C,\diamond)$ over $F_0$ with the bilinear form $\bb,\pp$ and the conjugacy map: 
\[
r(x):=\bb x,e\pp e-x,\quad \text{for}~ x\in A.
\]
Consider the new multiplication $\star: x\star y:=r(x)\diamond r(y)$. This new multiplication yields $\bb x\star y, z\pp=\bb x,y\star z\pp$. So $(C,\star)$ is a symmetric composition algebra, which is called the para-Cayley algebra (see \cite[34.A]{KMRT}). The multiplication table of $(C,\star)$ is given by Table \ref{tab 1} (we write ``$\cdot$" instead of $0$ for clarity); and denote by $(V,*)$ the normal twisted composition algebra obtained from the para-Cayley algebra.

\begin{table}[H]
\begin{tabular}{c|c|cccc|cccc}
\multicolumn{9}{c}{y} \\ \hline
		&$\star$&$e_1$&$e_2$&$e_3$&$e_4$&$e_5$&$e_6$&$e_7$&$e_8$\\ \hline
		&$e_1$&$\cdot$&$\cdot$&$\cdot$&$-e_1$&$\cdot$&$-e_2$&$e_3$&$-e_4$\\
		&$e_2$&$\cdot$&$\cdot$&$e_1$&$\cdot$&$-e_2$&$\cdot$&$-e_5$&$-e_6$\\
		&$e_3$&$\cdot$&$-e_1$&$\cdot$&$\cdot$&$-e_3$&$-e_5$&$\cdot$&$e_7$\\
		&$e_4$&$\cdot$&$-e_2$&$-e_3$&$e_5$&$\cdot$&$\cdot$&$\cdot$&$-e_8$\\ \hline
		$x$&$e_5$&$-e_1$&$\cdot$&$\cdot$&$\cdot$&$e_4$&$-e_6$&$-e_7$&$\cdot$\\
		&$e_6$&$e_2$&$\cdot$&$-e_4$&$-e_6$&$\cdot$&$\cdot$&$-e_8$&$\cdot$\\
		&$e_7$&$-e_3$&$-e_4$&$\cdot$&$-e_7$&$\cdot$&$e_8$&$\cdot$&$\cdot$\\
		&$e_8$&$-e_5$&$e_6$&$-e_7$&$\cdot$&$-e_8$&$\cdot$&$\cdot$&$\cdot$\\
\end{tabular}
\caption{The split para-Cayley algebra multiplication $x\star y$}
\label{tab 1}
\end{table}
}\end{ex}

\begin{rk}\label{rk 234}
{\rm
Let $(A,F,q,\rho, *)$ be a normal twisted composition algebra. Consider the extended algebra $A'=A\otimes_{F_0}F$. We claim this extension algebra $A'$ is also a twisted composition algebra. In fact, we have a nice description of $A'$. Consider an isomorphism of $F$-algebras
\[
\nu: F \otimes_{F_0} F \stackrel{\sim}{\rightarrow} F \times F \times F \quad \text { given by} \quad r_1\otimes r_2\mapsto (r_1r_2, \rho(r_1)r_2, \theta(r_1)r_2).
\]
Note that $\rho\otimes id_F$ is identified with the map defined by $\tilde{\rho}(r_1,r_2,r_3)=(r_2,r_3,r_1)$ to make the following diagram commutative:
\[
\begin{tikzcd}
F\otimes_{F_0} F \arrow[r, "\rho\otimes id_F"] \arrow[d]  & F\otimes_{F_0} F \arrow[d] \\
F\times F\times F \arrow[r, "\tilde{\rho}"]   & F\times F\times F .
\end{tikzcd}
\]
We define the twisted vector spaces $\lx^{\rho}A$ and $\lx^{\theta}A$:
\[
\lx^{\rho}A=\{\lx^{\rho}x \mid x\in A\}, \quad \lx^{\theta}A=\{\lx^{\theta}x \mid x\in A\},
\]
with the operations: $\lx^{\rho}(rx)=\rho(r)\lx^{\rho}x, \lx^{\rho}(x+y)=\lx^{\rho}x+\lx^{\rho}y$, and $\lx^{\theta}(rx)=\theta(r)\lx^{\theta}x, \lx^{\theta}(x+y)=\lx^{\theta}x+\lx^{\theta}y$, for all $x,y\in A, r\in F$. Then there is an $F$-isomorphism $A\otimes_{F_0}F \stackrel{\sim}{\rightarrow} A\times \lx^{\rho}A \times \lx^{\theta}A$ given by:
\[
x\otimes r\mapsto (rx, r (\leftidx^\rho x), r (\leftidx^\theta x))
\]
 (see \cite[Remark 2.3]{KT}). To describe the multiplication in $A\otimes_{F_0}F$ and $A\times \lx^{\rho}A \times \lx^{\theta}A$, we need to consider $F$-bilinear maps:
\[
*_{id}:\lx^{\rho}A \times \lx^{\theta} A \rightarrow A, \quad *_{\rho}:\lx^{\theta} A \times A \rightarrow \lx^{\rho} A, \quad *_{\theta}: A \times \lx^{\rho} A \rightarrow \lx^{\theta} A,
\]
given by
\[
^{\rho}x *_{id} \leftidx^{\theta} y=x * y, \quad ^{\theta} x *_{\rho} y=\lx^{\rho}(x * y), \quad  x *_{\theta}\leftidx^{\rho} y=\lx^{\theta}(x * y),
\]
for all $x,y\in A$. Then the product $\diamond: (A\times \lx^{\rho}A\times \lx^{\theta}A)\times (A\times \lx^{\rho}A\times \lx^{\theta}A) \rightarrow A\times \lx^{\rho}A\times \lx^{\theta}A$ given by 
\[
(x,\lx^{\rho} x,\lx^{\theta} x) \diamond (y,\lx^{\rho} y,\lx^{\theta} y)=(^{\rho} x *_{id} \leftidx^{\theta} y,^{\theta} x *_{\rho} y, x *_{\theta} \leftidx^{\rho} y),
\]
will make the following diagram commutative:
\[
\begin{tikzcd}
(A\otimes_{F_0} F) \times (A\otimes_{F_0} F) \arrow[ r, "*\otimes id_{F}"] \arrow[d]  & A\otimes_{F_0} F \arrow[d] \\
(A\times \lx^{\rho}A\times \lx^{\theta}A)\times (A\times \lx^{\rho}A\times \lx^{\theta}A) \arrow[r, "\diamond"]   & A\times \lx^{\rho}A\times \lx^{\theta}A.
\end{tikzcd}
\] 
Finally, define quadratic forms $^{\rho} q:\lx^{\rho}A \rightarrow F$ and $\lx^{\theta} q: \lx^{\theta}A \rightarrow F$ by 
$$
^\rho q(^\rho x)=\rho(q(x)), \quad ^\theta q(^\theta x)=\theta(q(x)).
$$
We have an isomorphism:
\[
(A\otimes_{F_0} F, F\otimes_{F_0}F, q_F, \rho\otimes id_F, *\otimes id_F) \simeq \left(A \times \lx^{\rho} A \times \lx^{\theta} A, F \times F \times F, q\times^{\rho} q \times^{\theta} q, \tilde{\rho}, \diamond \right).
\]
}
\end{rk}


\section{Special orthogonal groups and triality}

In this section, we recall special orthogonal groups and spin groups for twisted composition algebras. Let $(V,q)$ be a vector space with a nonsingular quadratic form $q$ over a field $F$, with char$(F)$ different from $2$. Denote by $\bb ,\pp$ the bilinear form corresponding to $q$. For any $f\in \End_F(V)$, there exists an element $\sigma_q(f)\in\End_F(V)$ such that $\bb  x,f(y)\pp=\bb  \sigma_b(f)(x),y\pp$. We can see this using matrices: If $b\in \GL(V)$ denotes the Gram matrix of $\bb ,\pp$ with respect to a fixed basis, then $\bb  x,y\pp=x^{t}by$. Let $\sigma_q(f)=b^{-1}f^{t}b$. Then $\bb  x,f(y)\pp=x^tbf(y)=\bb  \sigma_b(f)(x),y\pp$. It is easy to see that $\sigma_q: \End_F(V)\rightarrow \End_F(V)$ given by $f\mapsto \sigma_q(f)$ is an involution of $\End_F(V)$, and we call $\sigma_q$ the involution corresponding to the quadratic form $q$. The special orthogonal group $\SO(V,q)$ is the subgroup of isomorphism group $\Isom(V,q)$ that preserve the form $\bb,\pp$ and have determinant equal to 1: 
\[
\SO(V,q):=\{g\in \Isom(V,q) \mid\bb  g(x),g(y)\pp =\bb  x,y\pp\}.
\]
Elements $g\in \SO(V,q)$ are called proper isometries (Improper isometries are elements in $\Isom(V,q)$ that preserve the form with determinant equal to $-1$).

The Clifford algebra $C(V,q)$ is the quotient of the tensor algebra $T(V):=\oplus_{n\ge 0}V^{\otimes n}$ by the ideal $I(q)$ generated by all the elements of the form $v\otimes v-q(v)\cdot 1$ for $v\in V$. Since $T(V)$ is a graded algebra, $T(V)=T_0(V)\oplus T_1(V)$, where $T_0(V)=T(V\otimes V)$ and $T_1(V)=V\otimes T_0(V)$. This induces a $\ZZ/2\ZZ$-grading of $C(V,q)$:
\[
C(V,q)=C_0(V,q)\oplus C_1(V,q).
\]
We call $C_0(V,q)$ the even Clifford algebra and $C_1(V,q)$ the odd Clifford algebra. When $\dim V=n$, we have $\dim C(V,q)=2^n$, and $\dim C_0(V,q)=2^{n-1}$ (see \cite[Chapter IV]{K1}). For every quadratic space $(V,q)$, the identity map on $V$ extends to an involution on the tensor algebra $T(V)$ which preserve the ideal $I(q)$: $(v_1\otimes \cdots \otimes v_r)^t:=v_r\otimes\cdots\otimes  v_1$ for $v_1,\dots v_r\in V$. It is therefore inducing a canonical involution of the Clifford algebra $\tau: C(V,q)\rightarrow C(V,q)$ given by $\tau(v_1\cdots v_d)=v_d\cdots v_1$. By using the even Clifford algebra $C_0(V,q)$, now we can consider the universal covering of $\SO(V,q)$, which is the spin group $\Spin(V,q)$ defined by:
\[
\Spin (V,q)=\{c\in C_0(V,q)^* \mid c V c^{-1}=V, \tau(c)c=1\}.
\]
For any $c\in \Spin (V,q)$, we have a linear map $\chi_c: x\mapsto cxc^{-1}$. This is an element in $\SO(V,q)$ since $q(\chi_c(x))=cxc^{-1}cxc^{-1}=q(x)$, and we can show that $\Spin(V,q)\rightarrow \SO(V,q)$ given by $c\mapsto \chi_c$ is surjective. We have an exact sequence:
\[
1\rightarrow \ZZ/2\ZZ\rightarrow \Spin(V,q)\rightarrow \SO(V,q)\rightarrow 1.
\]
The special orthogonal group scheme $\SSO(V,q)$ and the spin group scheme $\SSpin(V,q)$ over $F$ are defined by:
\[
\begin{array}{l}
\SSO(V,q)(R):=\{g\in \Isom(V_R,q) \mid\bb  g(x),g(y)\pp =\bb  x,y\pp, \det g=1\},	\\
\SSpin (V,q)(R):=\{c\in C_0(V_R,q)^* \mid c V_R c^{-1}=V_R, \tau(c)c=1\},
\end{array}
\]
for any $F$-algebra $R$, where $V_R:=V\otimes_F R$. In particular, when the quadratic space $(V,q)$ is the normal twisted composition algebra $(V,*)$, we denote by $\SSO(V,*)$ (resp. $\SSpin(V,*)$) the special orthogonal group (resp. spin group) for (split) normal twisted composition algebras.

The Clifford algebra for $(V,*)$ has a special structure. Consider the twisted vector spaces $^\rho V,$ $ ^\theta V$ in Remark \ref{rk 234}. For any $x\in V$, we have two $F$-linear maps 
$$
l_x: \lx^\rho V\rightarrow \lx^\theta V,\quad r_x: \lx^\theta V\rightarrow \lx^\rho V
$$
given by 
\[
l_{x}(\lx^\rho y)=\lx^{\theta}(x * y) \quad \text {and}\quad r_{x}(^{\theta} z)=\lx^{\rho}(z * x).
\]
(see \cite[\S3]{KT}). By Lemma \ref{lm 233}, it follows that the $F$-linear map:
\[
\al: (V,*)\rightarrow \End_{F}(^\rho V \oplus \lx^\theta V),\quad \text{given by}~x\mapsto \left(\begin{array}{cc} {0} & {r_{x}} \\ {l_{x}} & {0}\end{array}\right)
\]
satisfies $\alpha(x)^2=q(x) id$. Hence we can extend this map to:
$\al: C(V,*)\rightarrow \End_{F}(^\rho V \oplus \lx^\theta V)$ by the universal property of Clifford algebra. In fact, this map is an isomorphism of algebras  with involution (see \cite[Proposition (36.16)]{KMRT}):
\[
\alpha: (C(V,*),\tau)\stackrel{\sim}{\rightarrow} (\End_{F}(^\rho V \oplus \lx^\theta V), \sigma_{^\rho q\bot ^\theta q}),
\]
If we restrict this isomorphism to the even Clifford algebra $C_0(V,q)$, we get
 \[
 \alpha: (C_0(V,*),\tau)\stackrel{\sim}{\rightarrow} (\End_{F}({}^\rho V ),\sigma_{^\rho q})\times (\End_{F} ({}^\theta V), \sigma_{^\theta q}),
 \]
where $\sigma_{{}^\rho q}, \sigma_{^\theta q}$ are involutions corresponding to quadratic forms $\lx^\rho q, \lx^\theta q$ (we still use $\al$ to denote the isomorphism for simplicity).

\begin{rk}\label{rk 31}
	{\rm	
		For any $g_1\in\SSO(V,*)(F)$, there exist $g_2, g_3\in\SSO(V,*)(F)$ such hat:
		\[
		g_i(x*y)=g_{i+1}(x)*g_{i+2}(y),\quad i=1,2,3\pmod 3
		\]
		(see \cite[Proposition (36.17)]{KMRT}). It is easy to see that when $g_i$ satisfy the above equation, the following diagram $D(g_i,g_{i+1},g_{i+2})$ commutes:
		\[
		\begin{tikzcd}
			C_0(V,q) \arrow[ r, "\alpha"] \arrow[d, "C_0(g_i)"]  & \End_{F} (\lx^\rho V) \times \End_{F} (\lx^\theta V) \arrow[d, "\Int(\lx^\rho g_{i+1}) \times \Int({}^\theta g_{i+2})"] \\
			C_0(V,q) \arrow[r, "\alpha"]   & \End_{F} (^\rho V) \times \End_{F} (^\theta V).
		\end{tikzcd}
		\]
		Here the automorphism $C_0(g_i): C_0(V,*)\rightarrow C_0(V,*)$ of the even Clifford algebra given by
		\[
		C_0(g_i)(v_1\cdots v_{2r})=g_i(v_1)\cdots g_i(v_{2r}).
		\]
Conversely, for any $g_1,g_2,g_3\in \SSO(V,*)(F)$, if the diagram $D(g_i,g_{i+1},g_{i+2})$ commutes, we will get $g_i(x*y)=g_{i+1}(x)*g_{i+2}(y), i=1,2,3\pmod 3$. These two equivalent statements are called ``the principle of triality".
}\end{rk}

By using this isomorphism $\al$, we have the following result for normal twisted composition algebras. Similar results for symmetric composition algebras can be found in \cite[ Proposition (35.7)]{KMRT}. Let us set $\Spin(V,*):=\SSpin(V,*)(F)$.

\begin{thm}\label{thm 331}
There is an isomorphism 
\[
\Spin(V,*)\cong \{(g_1,g_2,g_3)\in \SO(V,q)^{\times 3} \mid g_i(x*y)=g_{i+1}(x)*g_{i+2}(y),~\text{for any}~ x,y\in V\}
\]
\end{thm}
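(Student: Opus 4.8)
The plan is to construct the isomorphism explicitly from the algebra isomorphism $\alpha\colon C_0(V,*)\stackrel{\sim}{\rightarrow}\End_F({}^{\rho}V)\times\End_F({}^{\theta}V)$ of \S3 together with the principle of triality of Remark \ref{rk 31}. Write $T$ for the group on the right-hand side of the asserted isomorphism. Given $c\in\Spin(V,*)\subset C_0(V,*)^{*}$, I would put $g_1:=\chi_c\in\SO(V,q)$, the proper isometry $x\mapsto cxc^{-1}$, and let $g_2,g_3\in\GL_F(V)$ be the endomorphisms determined by $\alpha(c)=({}^{\rho}g_2,{}^{\theta}g_3)$, where an $F$-endomorphism of ${}^{\rho}V$ (resp.\ ${}^{\theta}V$) is identified with its underlying $F$-endomorphism of $V$. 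Then I would set $\Phi(c):=(g_1,g_2,g_3)$; since $c\mapsto\chi_c$ and $\alpha$ are both multiplicative, $\Phi$ is a group homomorphism $\Spin(V,*)\to\GL_F(V)^{\times 3}$, and everything reduces to showing $\Phi$ lands in $T$ and is bijective.

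First I would check that $\Phi(c)\in T$. Because $\alpha$ carries $\tau$ to $\sigma_{{}^{\rho}q}\times\sigma_{{}^{\theta}q}$, the identity $\tau(c)c=1$ gives $\sigma_{{}^{\rho}q}({}^{\rho}g_2)\,{}^{\rho}g_2=1$ and $\sigma_{{}^{\theta}q}({}^{\theta}g_3)\,{}^{\theta}g_3=1$, which unwind to say that $g_2$ and $g_3$ are isometries of $(V,q)$; since $\Phi$ is a morphism of algebraic groups, $\Spin(V,*)$ is connected, and $\Phi(1)=(\mathrm{id},\mathrm{id},\mathrm{id})$, in fact $g_2,g_3\in\SO(V,q)$. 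Next, $C_0(\chi_c)=\Int(c)$ on $C_0(V,*)$ by definition of $C_0(g)$, so applying the algebra homomorphism $\alpha$ yields
\[
\alpha\circ C_0(\chi_c)=\Int(\alpha(c))\circ\alpha=\bigl(\Int({}^{\rho}g_2)\times\Int({}^{\theta}g_3)\bigr)\circ\alpha,
\]
i.e.\ the diagram $D(\chi_c,g_2,g_3)$ of Remark \ref{rk 31} commutes; by the converse half of the principle of triality, $(g_1,g_2,g_3)$ satisfies $g_i(x*y)=g_{i+1}(x)*g_{i+2}(y)$ for all $i=1,2,3\pmod 3$, so $\Phi(c)\in T$.

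Injectivity is immediate, since $\Phi(c)=(\mathrm{id},\mathrm{id},\mathrm{id})$ forces $\alpha(c)=(\mathrm{id},\mathrm{id})$, hence $c=1$ as $\alpha$ is an isomorphism. For surjectivity I would take $(g_1,g_2,g_3)\in T$ and, using the surjection $\Spin(V,*)\to\SO(V,q)$ from the exact sequence of \S3, choose $c$ with $\chi_c=g_1$ and write $\Phi(c)=(g_1,g_2',g_3')\in T$. Both $D(g_1,g_2,g_3)$ (by the forward half of triality, as $(g_1,g_2,g_3)\in T$) and $D(g_1,g_2',g_3')$ commute, so comparing them through the isomorphism $\alpha$ gives $\Int({}^{\rho}g_2)=\Int({}^{\rho}g_2')$ and $\Int({}^{\theta}g_3)=\Int({}^{\theta}g_3')$; since $\Int$ has kernel the scalars, $g_2=\mu g_2'$ and $g_3=\nu g_3'$ for some $\mu,\nu\in F^{*}$. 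Then $g_2(x)*g_3(y)=\rho(\mu)\theta(\nu)\,\bigl(g_2'(x)*g_3'(y)\bigr)$, while $g_1(x*y)$ equals both $g_2(x)*g_3(y)$ and $g_2'(x)*g_3'(y)$, forcing $\rho(\mu)\theta(\nu)=1$; and $\mu^{2}=\nu^{2}=1$ because $g_2,g_2'$ (resp.\ $g_3,g_3'$) are isometries, so $\mu=\nu\in\{\pm1\}$. If $\mu=1$ then $\Phi(c)=(g_1,g_2,g_3)$, while if $\mu=-1$ then $\alpha(-c)=-\alpha(c)=({}^{\rho}g_2,{}^{\theta}g_3)$ and $\chi_{-c}=\chi_c=g_1$, so $\Phi(-c)=(g_1,g_2,g_3)$; either way $\Phi$ is surjective, hence an isomorphism of groups.

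The step I expect to be the main obstacle is the one hidden in the second paragraph: matching, on the one hand, the block structure of $\alpha$ — which sends $V$ onto the off-diagonal part $\Hom_F({}^{\rho}V,{}^{\theta}V)\oplus\Hom_F({}^{\theta}V,{}^{\rho}V)$ and $C_0(V,*)$ onto the diagonal part — together with the normalizing condition $cVc^{-1}=V$, with, on the other hand, the three triality identities defining $T$, all while keeping the $\rho$- and $\theta$-twists straight when passing between $\End_F({}^{\rho}V)$ and $\End_F(V)$. Remark \ref{rk 31} packages exactly this correspondence (the analogue of \cite[Proposition (35.7)]{KMRT} for symmetric composition algebras), so once it is in hand the remaining points — the sign ambiguity in the surjectivity argument and the appeal to connectedness of $\Spin(V,*)$ to replace ``isometry'' by ``proper isometry'' — are routine.
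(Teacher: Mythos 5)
Your proposal is correct and uses the same core device as the paper: the algebra-with-involution isomorphism $\alpha\colon C_0(V,*)\to\End_F({}^{\rho}V)\times\End_F({}^{\theta}V)$, reading $g_2,g_3$ off the two blocks of $\alpha(c)$ and setting $g_1=\chi_c$. The local steps, however, are organized differently. To get the triality relations, the paper applies $\alpha$ directly to $\chi_c(x)=cxc^{-1}$ and unwinds the off-diagonal blocks $r_x,l_x$, obtaining the two identities $g_2(x*y)=g_3(x)*\chi_c(y)$ and $g_3(x*y)=\chi_c(x)*g_2(y)$ by a bare-hands computation; you instead observe that $C_0(\chi_c)=\Int(c)$ on the even Clifford algebra, conclude that the diagram $D(\chi_c,g_2,g_3)$ of Remark \ref{rk 31} commutes, and invoke the converse half of the principle of triality. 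Since that converse is stated for elements of $\SO$, you need propriety first, which you supply by a connectedness argument (image of the connected group $\Spin(V,*)$ under an algebraic morphism); the paper instead derives the identities without assuming propriety and then gets propriety afterwards from the commuting diagram together with \cite[Proposition (13.2)]{KMRT}. Finally, your surjectivity argument lifts $g_1$ through $\Spin\to\SO$ and disposes of the $\pm 1$ ambiguity in $g_2,g_3$ explicitly, which is more careful than the paper's brief statement that one may take $c=\alpha^{-1}(\mathrm{diag}({}^{\rho}g_2,{}^{\theta}g_3))$ (which leaves the verifications that $cVc^{-1}=V$ and $\chi_c=g_1$ implicit). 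All of these variants are sound; your version leans more on the packaged statement of Remark \ref{rk 31} and on general connectedness, while the paper's is slightly more computational and self-contained.
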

	
\begin{proof}
Let $c\in C_0(V)^*$. Using the isomorphism with involution $\al$, we obtain $\lx^\rho g_2\in \End_F(^\rho V)$ and $^\theta g_3\in \End_F(^\theta V)$ such that 
\[
\alpha(c)=\left(\begin{array}{cc} {^\rho g_2} & {0} \\ {0} & {^\theta g_3}\end{array}\right)\in \End_{F}(^\rho V )\times \End_{F} (^\theta V).
\]
We have
\[
\alpha(\tau(c)c)=\left(\begin{array}{cc} {^\rho \sigma_q(g_2)} & {0} \\ {0} & {^\theta \sigma_q(g_3)}\end{array}\right)\left(\begin{array}{cc} {^\rho g_2} & {0} \\ {0} & {^\theta g_3}\end{array}\right)=I,
\]
which implies $\sigma_q(g_2)g_2=1, \sigma_q(g_3)g_3=1$, i.e., $g_2, g_3$ are isometries. Consider $\chi_c(x)=cxc^{-1}\in V$. By applying $\alpha$ on both sides, we have $\al(\chi_c(x))=\al(c)\al(x)\al(c^{-1})$, which gives us:
\[
\left(\begin{array}{cc} {0} & {r_{\chi_c(x)}} \\ {l_{\chi_c(x)}} & {0}\end{array}\right)=\left(\begin{array}{cc} {0} & {\lx^\rho g_2\cdot r_x\cdot \lx^\theta \sigma_q(g_3)} \\ {^\theta g_3\cdot l_x\cdot \lx^\rho \sigma_q(g_2)} & {0}\end{array}\right).
\]
	This is equivalent to $^\rho g_2 r_x=r_{\chi_c(x)}\leftidx^\theta g_3$, $^\theta g_3 l_x=l_{\chi_c(x)}\leftidx^\rho g_2$, i.e.,
\begin{equation}\label{eq 33}
g_2(x*y)=g_3(x)*\chi_c(y),\quad g_3(x*y)=\chi_c(x)*g_2(y).
\end{equation}
Finally, $\chi_c$ is an isometry since $q(\chi_c(x))=cxc^{-1}cxc^{-1}=q(x)$. By Remark \ref{rk 31}, Equation (\ref{eq 33}) yields the diagram $D(\chi_c, g_2, g_3)$ commuting, which shows that $C_0(\chi_c)$ is the identity on the center of $C_0(V,*)$. By \cite[Proposition (13.2)]{KMRT}, the isometry $\chi_c$ is proper. Similarly, $g_2, g_3$ are also proper isometries. Thus, let $g_1=\chi_c$. We get related equations as above. We now send $c\mapsto (g_1,g_2,g_3)$ that gives as above. This giving map is an injective group homomorphism since $\al$ is an isomorphism. It is also surjective, since, given any $(g_1,g_2,g_3)$ satisfying $g_i(x*y)=g_{i+1}(x)*g_{i+2}(y)$, there exist $c\in C_0(V)$ such that $\alpha(c)=\left(\begin{array}{cc} {^\rho g_2} & {0} \\ {0} & {^\theta g_3}\end{array}\right)$.
\end{proof}

From Theorem \ref{thm 331}, we have an isomorphism between group schemes over $F_0$:
\[
\Res_{F/F_0}(\SSpin(V,*))(R)\cong \{(g_1,g_2,g_3)\in \Res_{F/F_0}(\SSO(V,*)(R)^{\times 3} \mid g_i(x*y)=g_{i+1}(x)*g_{i+2}(y) \}
\]
for any $F_0$-algebra $R$. The transformation $\tilde{\rho}: (g_1,g_2,g_3)\mapsto (g_2,g_3,g_1)$ is an outer automorphism of $\Res_{F/F_0}(\SSpin(V,*))$ satisfying $\tilde{\rho}^3=1$. Here $\tilde{\rho}$ generates a subgroup of $\Aut(\Res_{F/F_0}(\SSpin(V,*)))$, which is isomorphic to $A_3$. Consider the fixed points of $\Res_{F/F_0}(\SSpin(V,*))$ under $A_3=\bb \tilde{\rho}\pp$. We obtain the triality group for the special orthogonal groups $G$:
\begin{flalign*}
	G(R) &:=\Res_{F/F_0}(\SSpin(V,*))^{A_3}(R)\\
	  &\cong \{ g\in \SSO(V,*)(R\otimes_{F_0}F) \mid g(x*y)=g(x)*g(y) ~\text{for all}~x,y\in V\otimes_{F_0}R \}.
\end{flalign*}
for any $F_0$-algebra $R$.


\section{Affine Grassmannians for triality groups}

In this section we give the definition of affine Grassmannians for triality groups. Recall that the affine Grassmannian for general groups is representable by an ind-scheme and is a quotient of loop groups. 

Let $k$ be a field. We consider the field $K=k\llp t\rlp$  of Laurent power series with indeterminate $t$ and coefficients in $k$. Let $\calO_K=k\lp t\rp$ be the discretely valued ring of power series with coefficients in $k$. For a $k$-algebra $R$, we set $\DD_R=\Spec(R\lp t\rp)$, resp. $\DD_R^*=\DD_R\setminus \{t=0\}=\Spec(R\llp t\rlp)$, which we picture as an $R$-family of discs, resp., an $R$-family of punctured discs.

We recall some functors from \cite[\S 1]{PR3}: Let $X$ be a scheme over $K$. We consider the functor $LX$ from the category of $k$-algebras to that of sets given by 
\[
R\mapsto LX(R):= X(R\llp t\rlp).
\]
If $\calX$ is a scheme over $\calO_K$, we denote by $L^+\calX$ the functor  from the category of $k$-algebras to that of sets given by 
\[
R\mapsto L^+\calX(R):= \calX(R\lp t\rp).
\]
The functors $LX, L^+\calX$ give sheaves of sets for the fpqc topology on $k$-algebras. In what follows, we will call such functors ``$k$-spaces" for simplicity. 

\begin{ex}
{\rm 	
If $\calX={\bf A}^r_{\calO_K}$ is the affine space of dimension $r$ over $\calO_K$, then $L^+\calX$ is the infinite dimensional affine space $L^+\calX=\prod\limits_{i=0}^{\infty}{\bf A}^r$, via:
\[
L^+\calX(R)=\Hom_{k\lp t\rp}(k\lp t\rp[T_1,...,T_r],R\lp t\rp)=R\lp t\rp^r=\prod_{i=0}^{\infty}R^r=\prod\limits_{i=0}^{\infty}{\bf A}^r(R).
\]
Let $\calX$ be the closed subscheme of ${\bf A}^r_{\calO_K}$ defined by the vanishing of polynomials $f_1,...,f_n$ in $k\lp t\rp[T_1,...,T_r]$. Then $L^+\calX(R)$ is the subset of $L^+{\bf A}^r(R)$ of $k\lp t\rp$-algebra homomorphisms $k\lp t\rp[T_1,...,T_r]\rightarrow R\lp t\rp$ which factor through $k\lp t\rp[T_1,...,T_r]/(f_1,...,f_n)$. If $X$ is an affine $K$-scheme, $LX$ is represented by a strict ind-scheme.
}\end{ex}

Let $X$ be a linear algebraic group over $K$. The loop group associated to $X$ is the ind-scheme $LX$ over $\Spec (k)$. We list some properties of loop groups:
\begin{itemize}
	\item [(1)] $L(X\times_{k}Y)=LX\times_{k} LY$;
	\item [(2)] If $k^{\prime}$ is a $k$ -field extension, then we have an isomorphism of ind-schemes over $k^{\prime}$
\[
LX \times_{k} \Spec(k^{\prime}) \simeq L(X\times_{k\llp t\rlp} \Spec(k^{\prime}\llp t\rlp))
\]
\item[(3)] Assume that $K^{\prime} / K$ is a finite extension of $K$, where $K^{\prime}=k\llp u\rlp$. If $X=\Res_{K^{\prime} / K} H$ for some linear algebraic group $H$ over $K^{\prime},$ then we have an isomorphism of ind-schemes over $k$:
\[
L X \simeq L H,
\]
by
\[
(L X)(R)=X(R\llp t\rlp)=H(R\llp t\rlp\otimes_{k\llp t\rlp} k\llp u\rlp)=H((R\llp u\rlp)=LH(R).
\]
\end{itemize}

Now let $\calX$ be a flat affine group scheme of finite type over $k\lp t\rp$. Let $X=\calX_{\eta}$ denote the generic fiber of $\calX$. This is a group scheme over $k\llp t\rlp$.  We consider the quotient sheaf over $\Spec(k)$:
\[
\calF_{\calX}:=L \calX_{\eta} / L^{+} \calX.
\]
This is the fpqc sheaf associated to the presheaf which to a $k$ -algebra $R$ associates the quotient $\calX(R\llp t\rlp) /\calX(R \lp t \rp)$. Generally, the  affine Grassmannian for $\calX$ is the functor $\Gr_\calX:\Alg_k\rightarrow\Sets$ 	which associates to a $k$-algebra $R$ the isomorphism classes of pairs $(\calE,\al)$ where $\calE\rightarrow \DD_R$ is a left fppf $\calX$-torsor and $\al\in\calE(\DD_R^*)$ is a section.

Here a pair $(\mathcal{E}, \alpha)$ is isomorphic to $(\mathcal{E}^{\prime}, \alpha^{\prime})$ if there exists a morphism of $\calX$ -torsors $\pi: \mathcal{E} \rightarrow \mathcal{E}^{\prime}$ such that $\pi \circ \alpha=\alpha^{\prime}$. The datum of a section $\alpha \in \mathcal{E}(\DD_{R}^{*})$ is equivalent to the datum of an isomorphism of $\calX$ -torsors
\[
\mathcal{E}_{0}|_{\DD_{R}^{*}} \stackrel{\simeq}{\longrightarrow} \mathcal{E} |_{\DD_{R}^{*}}, \quad g \mapsto g \cdot \alpha,
\]
where $\mathcal{E}_{0}:=\calX$ is viewed as the trivial $\calX$ -torsor. The loop group $L	X$ acts on the affine Grassmannian via $g\cdot [(\calE,\al)]=[(\calE, g\al)]$.

\begin{prop}\label{prop 422}
If $\calX\rightarrow \Spec(k\lp t\rp)$ is a smooth affine group scheme, then the map $LX\rightarrow \Gr_\calX$ given by $g\mapsto [(\calE_0,g)]$ induces an isomorphism of fpqc quotients:
\[
\calF_\calX\cong \Gr_\calX.
\]
\end{prop}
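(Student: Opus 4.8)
The plan is to show the canonical map $L X \to \Gr_\calX$, $g \mapsto [(\calE_0, g)]$, is surjective as a map of fpqc sheaves and that its fibers are precisely the left cosets of $L^+\calX$, so that it factors through an isomorphism $\calF_\calX = LX/L^+\calX \xrightarrow{\sim} \Gr_\calX$. First I would unwind the definitions: a point of $\Gr_\calX(R)$ is (up to isomorphism) a pair $(\calE, \al)$ with $\calE \to \DD_R$ an fppf $\calX$-torsor and $\al \in \calE(\DD_R^*)$; giving $\al$ is equivalent to a trivialization of $\calE$ over $\DD_R^*$. So the only obstruction to $(\calE,\al)$ being in the image of $LX$ is the existence of a trivialization of $\calE$ over all of $\DD_R$ (not just the punctured disc): if $\calE$ is trivial over $\DD_R$, pick a trivialization $\beta \in \calE(\DD_R)$; then $\beta$ and $\al$ differ over $\DD_R^*$ by a unique $g \in \calX(\DD_R^*) = X(R\llp t\rlp) = LX(R)$, and $(\calE, \al) \cong (\calE_0, g)$. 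Changing $\beta$ replaces $g$ by $g h$ with $h \in \calX(\DD_R) = L^+\calX(R)$, which is exactly the $L^+\calX$-ambiguity; and if $(\calE_0, g)$ and $(\calE_0, g')$ are isomorphic, the isomorphism is right multiplication by an element of $\calX(\DD_R) = L^+\calX(R)$ carrying $g$ to $g'$. This shows that, \emph{after passing to a cover over which $\calE$ trivializes}, the map $LX \to \Gr_\calX$ identifies with the quotient map $LX \to LX/L^+\calX$.

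The step that does the real work is the local triviality: I must show every fppf $\calX$-torsor $\calE$ over $\DD_R = \Spec(R\lp t\rp)$ becomes trivial after an fpqc (indeed étale, or even Zariski) cover $\Spec(R') \to \Spec(R)$. This is where smoothness of $\calX \to \Spec(k\lp t\rp)$ enters. The argument I would give: $\calE \to \DD_R$ is itself smooth (being an $\calX$-torsor, it is fppf-locally isomorphic to $\calX \times \DD_R$, and smoothness descends), so $\calE \to \Spec(R\lp t\rp)$ is a smooth morphism; reducing mod $t$, $\calE_0 := \calE \times_{\Spec R\lp t\rp} \Spec R$ is a smooth $\calX_0$-torsor over $\Spec R$ where $\calX_0 = \calX \times_{k\lp t\rp} k$. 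A smooth scheme with a section over which a smooth group acts simply transitively — equivalently, after an fppf cover $\Spec R' \to \Spec R$ refining to one where $\calE_0$ has a point, $\calE_0 \times_R R'$ is trivial. Then one lifts this section across the nilpotent-like thickenings $\Spec R'[t]/(t^n)$ inductively using smoothness (formal smoothness of $\calX$ over $k\lp t\rp$ guarantees the lifts exist), and passes to the limit to get a section of $\calE$ over $\Spec R'\lp t\rp = \DD_{R'}$ — here one uses that $R'\lp t\rp = \varprojlim R'[t]/(t^n)$ and that $\calE$ is of finite presentation, so a compatible system of sections over the thickenings gives a genuine section over $\DD_{R'}$ (this is the content of \cite[\S1]{PR3}; it is essentially Greenberg's theorem / the standard formal-lifting argument for smooth affine schemes over a complete DVR with the appropriate base change). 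After this cover $R \to R'$, $\calE$ is trivial and the previous paragraph applies.

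Finally I would assemble: the two functors $\calF_\calX$ and $\Gr_\calX$ are both fpqc sheaves (for $\calF_\calX$ by construction as a sheafification, for $\Gr_\calX$ because torsors and their morphisms satisfy descent), and I have exhibited a morphism $\calF_\calX \to \Gr_\calX$ together with, fpqc-locally on the base, an inverse; by the sheaf property this glues to an isomorphism. I expect the main obstacle to be precisely the local-triviality / section-lifting step: making rigorous that smoothness of $\calX$ over $k\lp t\rp$ forces fpqc-local triviality of $\calX$-torsors over the disc $\DD_R$, including the passage from the compatible sections over all finite-order thickenings $R'[t]/(t^n)$ to an actual section over $R'\lp t\rp$. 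Everything else — the coset bookkeeping and the sheafification formalism — is routine once that is in hand.
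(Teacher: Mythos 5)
Your proposal is correct and matches the argument in the source the paper cites (Zhu, Proposition 1.3.6): reduce the torsor modulo $t$, use smoothness to produce a section \'etale-locally over $\Spec R$, lift it through the $t$-adic thickenings $R'[t]/(t^n)$ by formal smoothness of $\calE$ over $\DD_{R'}$, and assemble a genuine section over $R'\lp t\rp=\varprojlim R'[t]/(t^n)$ using that $\calE$ is affine. One small slip: the parenthetical ``or even Zariski'' is false in general --- for instance ${\rm PGL}_n$-torsors (Azumaya algebras) need not trivialize Zariski-locally --- but \'etale locality is what your argument actually uses, and that is the right level of generality.
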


\begin{proof}
See \cite[Proposition 1.3.6]{zhu}.
\end{proof}

Here are a few observations:
\begin{itemize}
	\item [(1)] If $\rho: \calX\rightarrow H$ is a map of group schemes which are flat of finite presentation over $k\lp t\rp$, then there is a map of functors:
\[
\Gr_\calX\rightarrow \Gr_H,\quad (\calE,\al)\mapsto (\rho_* \calE, \rho_*\al),
\]
where $\rho_* \calE=H\times^{\calX}\calE$ denotes the push out of torsors, and $\rho_*\al=(id,\al):(H\times^\calX\calE_0) |_{\DD_{R}^{*}} \rightarrow (H\times^\calX\calE) |_{\DD_{R}^{*}}$ in this description. 
	\item [(2)] If $k'$ is a $k$-field extension, then we have:
\[
\Gr_\calX\times_{k}\Spec(k')\simeq \Gr_{\calX\times_{k\lp t\rp}\Spec(k'\lp t\rp)}.
\]
\end{itemize}

\begin{rk}{\rm 
When $X=\GL_{n}$, a $G$ -bundle on $\mathcal{E} \rightarrow \DD_{R}$ is canonically given by a rank $n$ vector bundle, i.e., a rank $n$ locally free $R \lp t\rp$-module $L$. The trivialization $\alpha$ induces an isomorphism of $R\llp t\rlp$-modules $L[t^{-1}]\simeq R\llp t\rlp^n$. By taking the image of $L \subset L\lp t\rp$ under this isomorphism, we obtain a well defined finite locally free $R\lp t\rp$ -module $\Lambda=\Lambda_{(\mathcal{E}, \alpha)} \subset R\llp t\rlp^n$ such that $\Lambda[t^{-1}]=R\llp t\rlp^n$. Note that $\Lambda$ depends only on the class of $(\mathcal{E}, \alpha)$. 	
}\end{rk}

Now we can define affine Grassmannians for triality groups. In what follows, let $k$ be a field with char$(k)\neq 2,3$. Suppose that the cubic primitive root $\xi$ is in $k$. We set $F=k\llp u\rlp, F_0=k\llp t\rlp$ with $u^3=t$. Thus $F/F_0=k\llp u\rlp/k\llp t\rlp$ is a cubic Galois field extension. Set $\Gamma=\Gal(F/F_0)$ with generator $\rho$ with $\rho(u)=\xi u$. Then $k\lp t\rp$ (resp. $k\lp u\rp$) is the ring of integers of $F_0$ (resp. $F$). 

Recall that $(V,*)$ is a normal twisted composition algebra obtained from the para-Cayley algebra over $F$, i.e., there is a basis $\{e_1,...,e_8\}$ of $(V,*)$ in the Table \ref{tab 1}, with the multiplication 
\[
x*y=(id_C\otimes \rho) (x)\star (id_C\otimes \theta)(y) \quad \text{for all}~x,y\in C\otimes_{F_0} F,
\]
where $(C,\star)$ is the split para-Cayley algebra. The quadratic form of $(V,*)$ is determined by the multiplication from Lemma \ref{lm 233}. Denote by $\bb ,\pp$ the bilinear form: $\bb  ,\pp : V\otimes V\rightarrow F$ corresponding to the quadratic form. Let $R$ be an $F_0$-algebra. Notice that the base change $V\otimes_{F_0}R$ is isomorphic to $R\llp u\rlp^8$. A finitely generated projective submodule in $V\otimes_{F_0}R$ is called a lattice in $V\otimes_{F_0}R$. We set $\LL=\oplus_{i=1}^{8}R\lp u\rp e_i$, and call it the standard lattice in $V\otimes_{F_0}R$. 

In \S3, we defined the triality group for special orthogonal groups over $F_0=k\llp t\rlp$:
\begin{flalign*}
	G(R) &=\Res_{F/F_0}(\SSpin(V,*))^{A_3}(R)\\
	&\cong \{ g\in \SSO(V,q)(R\otimes_{k\llp t\rlp}k\llp u\rlp) \mid g(x*y)=g(x)*g(y) ~\text{for all}~x,y\in V\otimes_{k\llp t\rlp}k\llp u\rlp \}.
\end{flalign*}
for any $k\llp t\rlp$-algebra $R$. Let $\mathscr{G}$ be the affine group scheme over $k\lp t\rp $ that represents the functor from $k\lp t\rp$-algebras to groups that sends $R$ to 
\[
\scrG(R):=\{g\in \SSO_8(k\lp u\rp \otimes_{k\lp t\rp }R) \mid g(x*y)=g(x)*g(y) ~\text{for all}~x,y\in \LL\}.
\]
Here $\scrG$ is the parahoric subgroup given by $\LL$ by Proposition 1.3.9, \cite{PK}. We denote by $L\scrG_{\eta}$ (resp. $L^+\mathscr{G}$) the functor from the category of $k$-algebras to groups given by $L\scrG_{\eta}(R)=\scrG_{\eta}(R\llp t\rlp)$ (resp. $L^+\mathscr{G}(R)=\mathscr{G}(R\lp t\rp)$). The quotient fpqc sheaf $L\scrG_{\eta}/L^+\mathscr{G}$ is by definition the affine Grassmannian for the triality group. Our main theorem  is:
\begin{thm}\label{thm 511}
There is an $L\scrG_{\eta}$-equivariant isomorphism 
\[
L\scrG_{\eta}/L^+\mathscr{G}\simeq \mathscr{F}
\]	
where the functor $\mathscr{F}$ sends a $k$-algebra $R$ to the set of finitely generated projective $R\lp u\rp $-modules $L$ (i.e., $R\lp u\rp $-lattices) of $V\otimes_{k}R\cong R\llp u\rlp^8$ such that
\begin{itemize}
	\item [(1)] $L$ is self dual under the bilinear form $\bb\ ,\ \pp$, i.e., $L\simeq \Hom_{R\lp u\rp}(L, R\lp u\rp )$.
	\item [(2)] $L$ is closed under multiplication, $L*L\subset L$.
	\item [(3)]  There exists $a\in L$, such that $q(a)=0$,  $\bb  a*a,a\pp=1$.

	\item [(4)] For $a$ as in (3), let $e=a+a*a$. Then, we have $\overline{e*x}=-\bar{x}=\overline{x*e}$ for any $\bar{x}$ satisfying $\bb  \bar{x},\bar{e}\pp =0$. (Here, $\bar{x}$ is the image of $x$ under the canonical map $L\rightarrow L/uL$.)\end{itemize}
\end{thm}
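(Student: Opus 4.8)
The plan is to produce mutually inverse maps between $L\scrG_\eta/L^+\scrG$ and $\scrF$, following the strategy outlined after the theorem statement: the forward map sends $g \in L\scrG_\eta(R)$ to the lattice $g(\LL)$, and the key work is to show that the image consists exactly of lattices satisfying (1)--(4), and that the fibers of this assignment are the $L^+\scrG$-orbits. By Proposition \ref{prop 422}, since $\scrG$ is smooth affine over $k\lp t\rp$, we have $\calF_\scrG \cong \Gr_\scrG$, so it suffices to work with the presheaf quotient $\scrG_\eta(R\llp t\rlp)/\scrG(R\lp t\rp)$ and sheafify; concretely one checks the statement for local (or strictly henselian) $R$ and then invokes fpqc-descent. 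First I would verify that the standard lattice $\LL$ itself lies in $\scrF(R)$: properties (1)--(2) are immediate from the construction of $(V,*)$ from the para-Cayley algebra and Table \ref{tab 1} (self-duality of the split form, and $\LL * \LL \subset \LL$ since the multiplication table has integral entries), while for (3)--(4) one exhibits the explicit element $a$ (a suitable $e_i$ or combination thereof with $q(a)=0$, $\langle a*a,a\rangle = 1$) and checks the para-unit relation modulo $u$ directly from the table. Then, for $g \in L\scrG_\eta(R)$, the lattice $L = g(\LL)$ automatically inherits (1)--(4) because $g$ is a similitude-type automorphism over $R\llp u\rlp$ preserving $\langle\ ,\ \rangle$, commuting with $*$, and preserving $q$; one must be slightly careful that $g$ sends the distinguished element $a$ for $\LL$ to a valid distinguished element for $L$, and that reduction mod $u$ is compatible with $g$ (it is, since $g \in \SSO_8(R\lp u\rp \otimes R\llp t\rlp)$ after clearing denominators appropriately — here one uses that $\scrG_\eta(R\llp t\rlp) = \scrG(R\lp u\rp \otimes_{k\lp t\rp} R\llp t\rlp)$).

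The substantive direction is surjectivity and injectivity of the induced map $\scrG_\eta(R\llp t\rlp)/\scrG(R\lp t\rp) \to \scrF(R)$, i.e.\ showing every $L \in \scrF(R)$ has the form $g(\LL)$ for some $g \in L\scrG_\eta(R)$, unique up to right multiplication by $L^+\scrG(R)$. This is where Springer's structure theory enters. Given $L$ with a distinguished element $a$ as in (3), set $b = a*a$ and $e = a + b$; using Lemma \ref{lm 233}(3) and the relations $q(a) = 0$, $\langle b, a\rangle = 1$, one shows (over $V \otimes R\llp u\rlp$) that $V$ decomposes as $V_0 \oplus V_1 \oplus V_2$ with $V_0 = R\llp u\rlp a \oplus R\llp u\rlp b$, $V_1 = V_0 * a$, $V_2 = V_0 * b$, mirroring Springer's decomposition of a reduced normal twisted composition algebra. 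The plan is then to promote this to a decomposition of the lattice: $L = R\lp u\rp a \oplus R\lp u\rp b \oplus L_1 \oplus L_2$ where $L_i = L \cap V_i$. Here conditions (1), (2), (4) do the work: closure under $*$ gives $L_0 * L \subset L$ so the projections onto the $V_i$ preserve $L$; self-duality pins down $L_1$ and $L_2$ as each other's duals under $\langle\ ,\ \rangle$ (since $V_1, V_2$ are the natural pairing blocks and $V_0$ is nondegenerate); and condition (4), the para-unit relation mod $u$, is exactly what forces $L_1$ and $L_2$ to be \emph{free} $R\lp u\rp$-modules of rank $3$ with a basis realizing the same multiplication table as $\LL$ — without (4) the lattice could be one of several inequivalent "shapes" in the special fiber. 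Concretely, one uses the $G_2$-torsor / octonion structure on $L/uL$ forced by (4) to build a basis of $L_1 \oplus L_2$ over $R\lp u\rp$ step by step, lifting an octonion basis of the reduction and correcting by higher-order terms, analogous to the Hensel-type argument in \cite[\S4.5]{Sp}.

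Once such a basis is produced, one defines $g \in \GL_8(R\llp u\rlp) = \GL_8(R\llp t\rlp)$ (after choosing coordinates, using $R\llp u\rlp \cong R\llp t\rlp$ as the base change) by sending the standard basis $\{e_i\}$ of $\LL$ to the constructed basis of $L$; the fact that the multiplication tables agree means $g$ commutes with $*$, and the fact that the bilinear forms agree (forced by self-duality plus the normalization in (3)) means $g \in \SSO$, so $g \in \scrG_\eta(R\llp t\rlp) = L\scrG_\eta(R)$ with $g(\LL) = L$. For injectivity: if $g(\LL) = g'(\LL)$ then $h := g^{-1}g'$ stabilizes $\LL$ and lies in $\scrG_\eta(R\llp t\rlp)$, hence $h \in \scrG(R\lp t\rp) = L^+\scrG(R)$ by the very definition of $\scrG$ as the stabilizer of $\LL$; equivariance under $L\scrG_\eta$ is clear from $g \cdot L = g(L)$. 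The main obstacle I anticipate is the decomposition-and-basis step over a general $k$-algebra $R$ rather than a field: Springer's arguments are for fields, and one must check that the projectivity of $L$, together with (1)--(4), really forces $L_1, L_2$ to be free (or at least that the construction can be done fpqc-locally on $R$, which suffices after sheafification) and that the inductive lifting of the octonion basis converges $u$-adically with coefficients in $R\lp u\rp$ rather than needing to invert something. Handling the case $q|_L$ non-split in the special fiber (if it can occur) or ruling it out via (4) is the delicate point; I expect condition (4) is precisely engineered so that only the split shape survives, making $\scrF$ a single $L\scrG_\eta$-orbit, matching the fact that the affine Grassmannian of the connected group $\scrG$ over the strictly henselian base is "connected" in the relevant sense and $\scrG$ has special fiber with a $G_2$ (hence split octonion) flavor.
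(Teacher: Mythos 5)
Your overall architecture matches the paper's: the map $g\mapsto g(\LL)$, verification that (1)--(4) are preserved under the group action, reduction to local henselian $R$, surjectivity via a Springer-style decomposition $L=R\lp u\rp a\oplus R\lp u\rp(a*a)\oplus L_1\oplus L_2$, and injectivity via the stabilizer. But the substantive content of the proof — showing that a lattice satisfying (1)--(4) actually has a basis realizing the standard multiplication table — is precisely where your proposal goes vague, and there are concrete slips. You write $V_1=V_0*a$, $V_2=V_0*b$ with $V_0=R\llp u\rlp a\oplus R\llp u\rlp b$; this cannot be the right decomposition, since $a*a=b$ and $b*a=0$ make $V_0*a$ rank one, not three. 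The paper instead takes $L_0$ to be the rank-$6$ orthogonal complement of $\{a,a*a\}$ in $L$ (using Nakayama to see that $a$, $a*a$ are primitive) and sets $L_i=L_0*f_i$. Also, your worry about whether $L_1,L_2$ are free is misplaced: over local henselian $R$ the ring $R\lp u\rp$ is local, so projective $\Rightarrow$ free automatically; condition (4) is not doing that job.

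The genuine gap is that you never isolate the engine of the surjectivity argument. After decomposing, the paper introduces the $\rho$-semilinear operators $t_i:L_0\to L_0$, $t_i(x)=x*f_i$, proves $t_i^3=-\mathrm{id}$ on $L_i$ (Lemma \ref{lm 513}), shows via Proposition \ref{prop 6} and Lemma \ref{lm 517} that condition (4) forces $\bar t_i=-\mathrm{id}$ on $\bar L_i=L_i\otimes\kappa$, and then runs a nonabelian Galois-cohomology argument: the $1$-cocycle $a_\rho=-A_i$ for $\Gamma=\Gal(F/F_0)$ reduces to the trivial class over $\kappa$, and since $H^1(\Gamma,U)=1$ for the unipotent kernel $U$ of $\GL_3(R\lp u\rp)\to\GL_3(\kappa)$ (using char $k\neq 3$ and the henselian property), the cocycle is already trivial over $R\lp u\rp$. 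This produces a basis with $t_i=-\mathrm{id}$, and from there the remaining entries of the multiplication table follow by the alternating trilinear form $\bb u,v,w\pp=\bb u,v\wedge w\pp$. Your ``Hensel-type step-by-step lifting of an octonion basis'' points at the same phenomenon (the $H^1$ vanishing is a packaged Hensel lemma), but you never produce the semilinear operators, the cube relation, or the cocycle whose triviality you need — without them, the claim that (4) forces the unique ``split shape'' remains an unsubstantiated expectation rather than an argument. This is the step you would need to actually carry out to complete the proof.
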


This theorem is proven in the next section. It gives a bijection between $k$-points in the affine Grassmannian for the triality group and a certain set of $k\lp u\rp$-lattices in $V$ that satisfy some special conditions.


\section{Proof of the main theorem}

Similar to the proof of \cite[Theorem 4.1]{PR3} in the unitary group case, it suffices to check the following two statements:
\begin{itemize}
	\item [(i)] For any $R$, $g\in L\scrG_{\eta}(R)$, $L=g(\LL)$ satisfies condition (1)-(4).
	\item [(ii)] For any $L\in \mathscr{F}(R)$ with $(R,\calM)$ a local henselian ring with maximal ideal $\calM$, there exists $g\in  L\scrG_{\eta}(R)$ such that $L=g(\LL)$.
\end{itemize}

Part (i) is easy to prove, since $g$ preserves the bilinear form $\bb ,\pp$ and the product $*$. For any $x,y\in L$, let $x=g(x_0), y=g(y_0)$ where $x_0,y_0\in\LL$. Then $x*y=g(x_0)*g(y_0)=g(x_0*y_0)\in L$, so (2) satisfied. (1) is obvious via $\bb  g(x),g(y)\pp=\bb  x,y\pp$. For (3), let $a=g(e_4)$. Then $\bb  a*a,a\pp=\bb  g(e_4)*g(e_4),g(e_4)\pp =\bb  g(e_5),g(e_4)\pp=\bb  e_4,e_5\pp=1$, and $q(a)=q(e_4)=0$. For $g(e)=g(a)+g(a*a)$, we have $g(e)*g(x)+g(x)=g(x)*g(e)+g(x)=0$ for any $g(x)$ satisfying $\bb  g(x),g(e)\pp =\bb  x,e\pp =0$. 

To prove part (ii), the key is to find a basis in $L$ such that the multiplication table under the basis is the same as Table \ref{tab 1}, i.e., we need to find a basis $\{f_i\}\in L$ such that $f_i*f_j=f_k$ for $e_i*e_j=e_k$ in the Table \ref{tab 1}. Thus we can define $g$ by $g(e_i)=f_i$, and $g$ is then in $L\scrG_{\eta}(R)$.

We claim that $a$ as in assumption (3) is a primitive element in $L$ (an element in $L$ that extends to a basis of $L$). Consider the quotient map
\[
R\lp u\rp \rightarrow R\rightarrow R/\calM=\kappa,
\]
where $\kappa$ is the residue field of $R$. There is a base change $L\rightarrow L\otimes_{R\lp u\rp }\kappa$, and we still denote by $\bar{x}$ the image of $x\in L$. Consider $\bar{a}\in  L\otimes_{R\lp u\rp }\kappa$. We have $\bb \overline{a*a},\bar{a}\pp=1$, hence $\bar{a}\neq 0$. By Nakayama's lemma, we can extend $a$ to a basis of $L$. Similarly, we can show that $a*a$ is also a primitive element. Here $a, a*a$ are independent by $\bb  a,a*a\pp =1$. Let $v_1,...,v_6$ be any base extension for $a,a*a$. We define a sublattice $L_0\subset L$: 
\[
L_0:=\{x\in L \mid \bb  x,a\pp =0, \bb  x,a*a\pp =0\}.
\]
For any $x\in L$, we can write $x$ as $\sum_{i=1}^6r_iv_i+r_7 a+r_8(a*a)$ for some $r_i\in R\lp u\rp$. Consider $v'_i=v_i-\bb  a,v_i\pp a*a-\bb  a*a,v_i\pp a$. It is easy to see that $\bb  v_i',a\pp =0, \bb  v_i',a*a\pp =0$, so $v_i'\in L_0$. And $v_i', a, a*a$ are linear independent. We obtain
\[
x=\sum_{i=1}^6r_iv'_i+(r_7+\sum_{i=1}^6 r_i\bb  v_i,a*a\pp) a+(r_8+\sum_{i=1}^6 r_i\bb  v_i,a\pp)(a*a).
\]
Therefore, $L=R\lp u\rp  a\oplus R\lp u\rp (a*a)\oplus L_0$, where $L_0$ is a sublattice of rank 6.

Set $f_1=a, f_2=a*a$. Here $f_1, f_2$ play similar roles as for $e_4,e_5$ in the Table \ref{tab 1}. By Lemma \ref{lm 232} and Lemma \ref{lm 233}, we obtain a hyperbolic subspace $R\lp u\rp  a\oplus R\lp u\rp (a*a)$ with:
\[
\begin{array}{l}
f_1*f_1=f_2, \quad f_2*f_2=f_1,\\
f_1*f_2=f_2*f_1=0,\\
q(f_1)=q(f_2)=0, \quad \bb  f_1,f_2\pp =1.	
\end{array}
\]

\begin{lm}\label{lm512}
We have
\[
L_0*f_i\subset L_0,\quad f_i*L_0\subset L_0,
\]
for $i=1,2$.
\end{lm}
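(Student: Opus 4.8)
The plan is to prove both inclusions by a direct computation, using the cyclicity of the bilinear form with respect to the twisted product (Definition \ref{def 21}(3)) together with the multiplication rules for $f_1 = a$ and $f_2 = a*a$ recorded just above the lemma.

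Fix $w \in L_0$. First I would observe that the relevant products remain in the lattice $L$: since $L$ is closed under $*$ by condition (2) of Theorem \ref{thm 511}, and $f_1 = a \in L$ while $f_2 = a*a \in L*L \subseteq L$, we get $w*f_i \in L$ and $f_i*w \in L$ for $i=1,2$. It then remains only to check that each of these four elements pairs to zero with both $f_1$ and $f_2$ under $\bb\ ,\ \pp$. For this, apply the two nontrivial forms of Definition \ref{def 21}(3): placing $w, f_i, f_j$ in the three slots gives
\[
\bb w*f_i, f_j\pp = \rho\bigl(\bb f_i*f_j, w\pp\bigr), \qquad \bb f_i*w, f_j\pp = \theta\bigl(\bb f_j*f_i, w\pp\bigr),
\]
for all $i,j \in \{1,2\}$. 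By the displayed multiplication table $f_1*f_1 = f_2$, $f_2*f_2 = f_1$, $f_1*f_2 = f_2*f_1 = 0$, the product $f_i*f_j$ is always one of $0, f_1, f_2$; since $\bb w, f_1\pp = \bb w, f_2\pp = 0$ by the definition of $L_0$ (and $\bb\ ,\ \pp$ is symmetric, while $\rho(0) = \theta(0) = 0$), all eight pairings vanish. Hence $w*f_i \in L_0$ and $f_i*w \in L_0$, which is the claim.

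I do not expect a genuine obstacle here. The only points requiring care are the $\rho$- versus $\theta$-twist in the two instances of Definition \ref{def 21}(3), and the observation that one must first invoke closure of $L$ under multiplication (condition (2) of Theorem \ref{thm 511}) so that $w*f_i$ and $f_i*w$ are elements of $L$ before the orthogonality conditions defining $L_0$ can be imposed; everything else follows immediately from the multiplication rules already established for $f_1$ and $f_2$.
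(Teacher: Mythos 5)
Your proof is correct and follows essentially the same route as the paper's: reduce the membership criterion for $L_0$ to the vanishing of pairings $\langle w*f_i, f_j\rangle$ and $\langle f_i*w, f_j\rangle$, then use the cyclic identity from Definition~\ref{def 21}(3) to rewrite these as (twisted images of) $\langle f_i*f_j, w\rangle$ or $\langle f_j*f_i, w\rangle$, which vanish since $f_i*f_j\in\{0,f_1,f_2\}$ and $w\perp f_1,f_2$. Your version is a bit more explicit about the $\rho$- versus $\theta$-twists and about first invoking closure of $L$ under $*$, but the substance is identical.
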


\begin{proof}
For any $x\in L_0$, we have $\bb  x*f_i,f_i\pp =\rho(\bb  f_i*f_i,x\pp )=0$, and $\bb  x*f_i, f_{i+1}\pp =\rho(\bb  f_i*f_{i+1},x\pp)=0$ by Lemma \ref{lm 233}. Similarly for $f_i*x$.
\end{proof}

Define the $\rho$-linear transformations $t_i: L_0\rightarrow L_0$, given by $t_i(x)=x*f_i$ for $i=1,2$. Take $L_i=t_i(L_0)=L_0*f_i$. Trivially, $t_i(L_i)\subset L_i$. Both $L_i$ are isotropic with rank $(L_i) \leq 3$ since $f_i$ is an isotropic element. For any $x\in L_0$, we have
\[
(f_2*x)*f_1+(f_1*x)*f_2=\theta(\bb  f_1,f_2\pp )x=x,
\]
by Lemma \ref{lm 233}. So $L_0=L_1+L_2$. Since rank$(L_i)\leq 3$, we must have a direct sum composition: $L_0=L_1\oplus L_2$. 

\begin{lm}\label{lm 513}\
\begin{itemize}
	\item [(1)] For any $x\in L_0$, $t_i^2(x)=- f_{i+1}*x$ $(i=1,2 \mod 2)$.
	\item [(2)] For any $x\in L_i$, $t_i^{3}(x)=-x$.
	\item [(3)] From (2), $t_i$ is a $R\lp t\rp$-isomorphism when restricted at $L_i$, more precisely, we have $t_i: L_i\rightarrow L_i,  ~x\mapsto x*f_i$. The inverse map $t_i^{-1}=-t_i^2$ is a $\theta$-linear transformation.
	\item [(4)] For $x\in L_1, y\in L_2$, we have $\bb  t_1(x),t_2(y)\pp =\rho(\bb  x,y\pp )$.
\end{itemize}
\end{lm}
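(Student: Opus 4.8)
The plan is to obtain all four parts from the composition identities of Lemmas~\ref{lm 232} and~\ref{lm 233}, together with the multiplication table $f_1*f_1=f_2$, $f_2*f_2=f_1$, $f_1*f_2=f_2*f_1=0$, $\bb f_1,f_2\pp=1$, and the defining property $\bb x,f_1\pp=\bb x,f_2\pp=0$ of elements $x\in L_0$. Before anything else I would record two auxiliary vanishing statements valid for $x\in L_i$: writing $x=z*f_i$ with $z\in L_0$ (possible since $L_i=L_0*f_i$), Lemma~\ref{lm 233}(1) gives $f_i*x=f_i*(z*f_i)=\rho(q(f_i))z=0$ because $q(f_i)=0$; and the second identity of Lemma~\ref{lm 233}(2) applied to $(z,f_i,f_{i+1})$ gives $x*f_{i+1}=(z*f_i)*f_{i+1}=\theta(\bb z,f_{i+1}\pp)f_i-(f_{i+1}*f_i)*z=0$. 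These two facts, together with Lemma~\ref{lm512}, are essentially the only inputs not already on the page.

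For part (1) I would apply the second identity of Lemma~\ref{lm 233}(2) with roles $(x,f_i,f_i)$: this yields $(x*f_i)*f_i+(f_i*f_i)*x=\theta(\bb x,f_i\pp)f_i$, and since $\bb x,f_i\pp=0$ for $x\in L_0$ and $f_i*f_i=f_{i+1}$ it collapses to $t_i^2(x)=-f_{i+1}*x$. For part (2), let $x\in L_i$; by part (1), $t_i^2(x)=-f_{i+1}*x$, which lies in $L_0$ by Lemma~\ref{lm512}, so $t_i^3(x)=-(f_{i+1}*x)*f_i$. Applying the second identity of Lemma~\ref{lm 233}(2) once more, now to $(f_{i+1},x,f_i)$, gives $(f_{i+1}*x)*f_i+(f_i*x)*f_{i+1}=\theta(\bb f_{i+1},f_i\pp)x$; the middle term vanishes because $f_i*x=0$ (the first auxiliary fact), and $\bb f_{i+1},f_i\pp=\bb f_1,f_2\pp=1$ is fixed by $\theta$, so $(f_{i+1}*x)*f_i=x$ and hence $t_i^3(x)=-x$.

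Part (3) is then purely formal: $t_i$ (hence $t_i^2$) maps $L_i=L_0*f_i$ into itself, and part (2) says $t_i$ and $-t_i^2$ are mutually inverse on $L_i$; since $\rho$ fixes $t=u^3$ and therefore $R\lp t\rp$, the $\rho$-linear map $t_i$ is $R\lp t\rp$-linear, while $t_i^2$ is $\theta$-linear as the square of a $\rho$-linear map. For part (4) I would invoke Lemma~\ref{lm 232}(3) with $(x,y,f_1,f_2)$: $\bb x*f_1,y*f_2\pp+\bb x*f_2,y*f_1\pp=\rho(\bb x,y\pp)\theta(\bb f_1,f_2\pp)=\rho(\bb x,y\pp)$, and since $x\in L_1$ forces $x*f_2=0$ by the second auxiliary fact (case $i=1$), the second pairing vanishes, leaving $\bb t_1(x),t_2(y)\pp=\rho(\bb x,y\pp)$.

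I do not expect a genuine obstacle: the lemma is a routine, if somewhat intricate, exercise in the triality identities once the two auxiliary vanishing statements are in place. The only points demanding care are getting those vanishing statements right — i.e.\ keeping straight which of $f_i*x$, $x*f_i=t_i(x)$, $f_{i+1}*x$, $x*f_{i+1}$ is zero for $x\in L_i$ — and tracking the $\rho$ versus $\theta$ twists, together with the fact that both act trivially on $F_0$, where $\bb f_1,f_2\pp=1$ lives.
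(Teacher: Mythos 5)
Your proof is correct and follows essentially the same path as the paper's: parts (1)–(3) are obtained from the same identities in Lemma~\ref{lm 233} together with the vanishing facts $f_i*x=0$ and $x*f_{i+1}=0$ for $x\in L_i$ (which the paper also derives in passing and records after the lemma). For part (4) you invoke Lemma~\ref{lm 232}(3) directly with the vanishing of $x*f_2$, whereas the paper rewrites the pairing via the cyclic identity in Definition~\ref{def 21}(3) and then applies parts (1) and (2); this is only a cosmetic difference.
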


\begin{proof}
(1) For $x\in L_0$, we have $t_1^2(x)=((x*f_1)*f_1)=-((f_1*f_1)*x)=-(f_2*x)$ by Lemma \ref{lm 233}. A similar argument gives $t_2^2(x)=-f_1*x$.

(2) For any $x\in L_1$, we have $t_1^3(x)=-((f_2*x)*f_1)$. Consider
\[
(f_2*x)*f_1+(f_1*x)*f_2=\theta(\bb  f_1,f_2\pp )x=x,
\]
by Lemma \ref{lm 233}. Let $x=z*f_1\in L_1$ for some $z\in L_0$. Then $f_1*x=f_1*(z*f_1)=0$ by $q(f_1)=0$. Hence $(f_2*x)*f_1=x$, and we obtain $t_1^3(x)=-x$. Similar calculations for $y\in L_2$, and gives $t_2^3(y)=-y$.

Part (3) follows from (2) immediately. For (4), we know that $\bb  t_1(x),t_2(y)\pp =\bb  x*f_1,y*f_2\pp =\rho(\bb  f_1*(y*f_2),x\pp )$, and 
\[
f_1*(y*f_2)=-t_2^2(y*f_2)=-t_2^2\cdot t_2(y)=-t_2^3(y)=y,
\]
by (1) and (2). Hence $\bb  t_1(x),t_2(y)\pp =\rho(\bb  x,y\pp )$.
\end{proof}

\begin{rk}
\rm{
(1) From the proof of above Lemma, we can see that $f_i*L_i=0$, and $L_i*f_{i+1}=0$ for $i=1,2 \mod 2$.

(2) Since $L_1, L_2$ are isotropic and $\bb  ,\pp $ restricted to $L_0$ is nondegenerate, the $L_i$ are in duality by the isomorphism $L_1\rightarrow L_2^{\vee}$ given by $x\mapsto \bb  x,-\pp$. Hence $L_1\simeq \Hom (L_2, R\lp u\rp )$. 
}
\end{rk}	

\begin{lm}\label{lm 515}
We have
\begin{itemize}
	\item [(1)] $L_1*L_2\subset R\lp u\rp f_1, \quad L_2*L_1 \subset R\lp u\rp f_2$,
	\item [(2)] $L_i*L_i\subset L_{i+1}$ $(i=1,2 \mod 2)$.
\end{itemize}	
\end{lm}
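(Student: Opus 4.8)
The plan is to establish everything by direct computation with the bilinear form, exploiting the decomposition $L_0 = L_1 \oplus L_2$ together with the multiplication rules for $f_1,f_2$ already recorded (namely $f_1*f_1=f_2$, $f_2*f_2=f_1$, $f_1*f_2=f_2*f_1=0$, $q(f_i)=0$, $\bb f_1,f_2\pp=1$) and the identities of Lemma \ref{lm 232} and Lemma \ref{lm 233}. The strategy in each case is the same: to show a product lands in a given rank-one (or rank-three) sublattice, I test it against a spanning set of the orthogonal complement and show all those pairings vanish, using the cyclic-shift identity $\bb x*y,z\pp = \rho(\bb y*z,x\pp)=\theta(\bb z*x,y\pp)$ from Definition \ref{def 21}(3). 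Since $\bb\ ,\ \pp$ restricted to $L_0$ is nondegenerate and $L = R\lp u\rp f_1 \oplus R\lp u\rp f_2 \oplus L_1 \oplus L_2$ is an orthogonal-type decomposition in the sense that $f_1,f_2$ pair only with each other and $L_1,L_2$ pair only with each other, this suffices to pin down where a product lies.

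For part (1): take $x\in L_1$, $y\in L_2$. To show $x*y\in R\lp u\rp f_1$ I must check $x*y$ is orthogonal to $f_1$, to all of $L_1$, and to all of $L_2$; then the only surviving component is along $f_2^\vee$, i.e.\ along $f_1$. We have $\bb x*y,f_1\pp=\rho(\bb y*f_1,x\pp)$; but $y\in L_2$, and by the remark following Lemma \ref{lm 513} we have $L_2*f_1 = t_1(L_2)$, which lies in $L_1$ — more usefully, recall $L_i * f_{i+1}=0$, so with $y\in L_2=L_{2}$ one gets $y*f_1 \in L_2*f_1$; I instead use $\bb x*y,f_1\pp=\theta(\bb f_1*x,y\pp)$ and note $f_1*x=0$ for $x\in L_1$ (from the proof of Lemma \ref{lm 513}(2)), giving $0$. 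Next $\bb x*y,z\pp$ for $z\in L_1$: rewrite via the cyclic identity and recognize a factor $z*x\in L_1*L_1$ or $y*z\in L_2*L_1$; iterating reduces every such pairing to one involving $f_1*(\cdot)$ on an $L_1$-element or $f_2*(\cdot)$ on an $L_2$-element, both zero. The case $\bb x*y,z\pp$ for $z\in L_2$ is handled symmetrically. The claim $L_2*L_1\subset R\lp u\rp f_2$ is the mirror statement, obtained by swapping the roles of $1$ and $2$ (equivalently applying $\rho$ and the symmetry $f_1\leftrightarrow f_2$).

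For part (2): take $x,y\in L_i$, say $i=1$; I must show $x*y\in L_2$, i.e.\ $x*y$ is orthogonal to $f_1$, to $f_2$, and to $L_2$ (leaving only the $L_1^\vee = L_2$-direction, wait — $L_2$ is dual to $L_1$, so orthogonality to $L_2$ would kill the $L_1$ part; I want $x*y\in L_1\oplus L_2$ first, then orthogonal to $L_2$ forces it into... let me be careful: $L_1$ pairs nondegenerately with $L_2$, so to land in $L_2$ I check orthogonality to $f_1$, $f_2$, and $L_2$). Here $\bb x*y,f_2\pp = \theta(\bb f_2*x,y\pp)$ and $f_2*x = -t_1^2(x)$ for $x\in L_1$... actually $f_2*x$ for $x\in L_1$ equals $-t_1^2(x)\in L_1$, so pairing with $y\in L_1$ gives $0$ since $L_1$ is isotropic; similarly $\bb x*y,f_1\pp=\rho(\bb y*f_1,x\pp)$ with $y*f_1=t_1(y)\in L_1$ isotropic against $x\in L_1$ gives $0$; and $\bb x*y,z\pp$ for $z\in L_2$ reduces by the cyclic identity to a pairing of $L_1$-elements against the isotropic $L_1$, or invokes $L_1*L_2\subset R\lp u\rp f_1$ from part (1) together with $\bb f_1, L_1\pp=0$. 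The case $i=2$ is symmetric.

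The main obstacle I anticipate is purely bookkeeping: keeping track of which of the three cyclic rewritings of $\bb x*y,z\pp$ produces a factor that is manifestly zero (a product of two isotropic $L_i$-elements, or $f_i*(\text{element of }L_i)$, or $(\text{element of }L_{i+1})*f_i$), and making sure the reduction actually terminates rather than cycling. There is no conceptual difficulty beyond the identities already in hand; the care needed is in consistently using the ``annihilation'' facts $f_i*L_i=0$, $L_i*f_{i+1}=0$, and the isotropy of each $L_i$, which together with Lemmas \ref{lm 232}–\ref{lm 233} close off every case.
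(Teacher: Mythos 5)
Your overall plan --- pin down where a product lives by pairing it against a spanning set and using the orthogonal decomposition $L = R\lp u\rp f_1 \oplus R\lp u\rp f_2 \oplus L_1 \oplus L_2$ --- is sound in outline, and your reduction to checking orthogonality against $f_1$, $L_1$, $L_2$ (resp.\ $f_1,f_2,L_2$) is correct. The verification $\bb x*y, f_1\pp = \theta(\bb f_1*x, y\pp) = 0$ via $f_1*L_1 = 0$ is also fine. But there is a genuine gap in the step where you dispose of the pairings $\bb x*y, z\pp$ with $z\in L_i$, and it is not mere bookkeeping.

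The problem is that ``iterating the cyclic identity'' does not terminate. The identity $\bb x*y, z\pp = \rho(\bb y*z, x\pp) = \theta(\bb z*x, y\pp)$ gives exactly three cyclic rewritings of the same quantity, and applying it again returns you to where you started; it never produces a visible $f_i$-factor on its own. For $x,z\in L_1$, $y\in L_2$, the three shifts are $\bb x*y,z\pp$, $\bb y*z,x\pp$, $\bb z*x,y\pp$ with factors in $L_1*L_2$, $L_2*L_1$, $L_1*L_1$ respectively --- precisely the spaces the lemma is trying to locate. To kill such a pairing you need to know where one of those products lands, and your proposal supplies that information only from the other half of the lemma, creating a circular dependence: to prove (1) in the $z\in L_1$ case you would need $L_1*L_1\subset L_2$ from (2), while to prove (2) in the $z\in L_2$ case you explicitly invoke $L_2*L_1\subset R\lp u\rp f_2$ from (1). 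Neither part closes without an independent input.

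The missing ingredient is a \emph{linearized} product identity that lets you trade one factor of a triple pairing for an $f_i$. The paper uses Lemma~\ref{lm 233}(2), which with $y=f_1$ and $z=y_1*f_2$ gives $(x_1*f_1)*(y_1*f_2) = -\big((y_1*f_2)*f_1\big)*x_1 + \theta(\bb x_1, y_1*f_2\pp)f_1$; the first term dies because $L_2*f_1=0$, so $x*y\in R\lp u\rp f_1$ outright --- no orthogonality argument is needed. For (2) the paper first gets $L_1*L_1\subset L_0$ by the easy pairings against $f_1,f_2$, and then derives $t_1(x_1)*t_1(x_2)=-t_2(x_1*x_2)$, which forces $L_1*L_1\subset L_2$. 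If you prefer to stay with the bilinear-form strategy, the fix is to prove (2) \emph{first}: having shown $L_1*L_1\subset L_0$, write $y=y_1*f_2$ and apply Lemma~\ref{lm 232}(3) with $(a,c,b,d)=(x,z,y_1,f_2)$ to get $\bb x*z, y_1*f_2\pp + \bb x*f_2, y_1*z\pp = \rho(\bb x,y_1\pp)\theta(\bb z,f_2\pp) = 0$, and conclude $\bb x*z,y\pp=0$ since $x*f_2\in L_1*f_2=0$. Once (2) is in hand, your orthogonality argument for (1) goes through verbatim: $\theta(\bb z*x,y\pp)=0$ because $z*x\in L_2$ is isotropic against $y\in L_2$, and $\rho(\bb y*z,x\pp)=0$ because $y*z\in L_1$ is isotropic against $x\in L_1$. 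In short: the orthogonality framework is fine, but the cyclic identity alone cannot supply the vanishing, and the order of deduction must be (2) before (1), not both simultaneously.
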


\begin{proof} (1) For any $x\in L_1, y\in L_2$, we write $x$ as $x=x_1*f_1$ with $x_1\in L_1$, and $y$ as $y=y_1*f_2$ with $y_1\in L_2$. Consider 
\[
x*y=(x_1*f_1)*(y_1*f_2)=-((y_1*f_2)*f_1)*x_1+\theta(\bb  x_1,y_1*f_2\pp )f_1,
\]
by Lemma \ref{lm 233}. Notice that $(y_1*f_2)*f_1\in L_2*f_1=0$. Thus we have $x*y=\theta(\bb  x_1,y_1*f_2\pp )f_1$. Further,
\begin{flalign*}
	\bb  x_1,y_1*f_2\pp &=\theta(\bb  t_1(x_1),t_2(y_2*f_2)\pp )\\
	&=\theta(\bb  x,t_2(y)\pp),
\end{flalign*}
by Lemma \ref{lm 513} (4). Hence $x*y=\rho(\bb  x,t_2(y)\pp )f_1$. Similarly, we have $y*x=\rho(\bb  t_1(x),y\pp )f_2$.

(2) For any $x_1,x_2\in L_1$, we first claim that $x_1*x_2\in L_0$. Consider $\bb  x_1*x_2,f_1\pp =\theta(\bb  f_1*x_1,x_2\pp )=0$ by $f_1*L_1=0$, and $\bb  x_1*x_2,f_2\pp =\rho(\bb  x_2*f_2,x_1\pp )=0$ by $L_1*f_2=0$. Using Lemma \ref{lm 233}, we find that
\begin{flalign*}
	t_1(x_1)*t_1(x_2)&=(x_1*f_1)*(x_2*f_1)\\
	&=-f_1*(x_2*(x_1*f_1))\\
	&=f_1*(f_1*(x_1*x_2))
	\end{flalign*}
by $\bb  x_1*f_1,f_1\pp =0$ and $\bb  f_1,x_2\pp =0$. We also have $f_1*(f_1*(x_1*x_2))=f_1*(-t_2^2(x_1*x_2))=t_2^4(x_1*x_2)=-t_2(x_1*x_2)$. Therefore, 
\[
t_1(x_1)*t_1(x_2)=-t_2(x_1*x_2).
\]
Since $x_1*x_2\in L_0$, we obtain that $t_2(x_1*x_2)\in L_2$. Hence $L_1*L_1\subset L_2$. Similarly, $L_2*L_2\subset L_1$.
\end{proof}

We now prove that $L$ has the same multiplication table as the Table \ref{tab 1}: We want to find a basis $\{x_1,x_2,x_3\}$ of $ L_1$ (resp. $\{y_1,y_2,y_3\}$ of $ L_2$) such that $t_1(x_i)=-id$ (resp. $t_2(y_i)=-id$). Consider the quotient map $R\lp u\rp\rightarrow \kappa=R\lp u\rp/(\calM,u)$. We set $\bar{L}=L\otimes_{R\lp u\rp}\kappa$, $\bar{L}_i=L_i\otimes_{R\lp u\rp}\kappa$ with multiplication $\bar{x}\star\bar{y}=\overline{x*y}$, and 
\[
\bar{t}_i: \bar{L}_i\rightarrow \bar{L}_i, \quad\text{given by}\quad \bar{t}_i(\bar{x})=\bar{x}\star \bar{f_i},
\] 
for $i=0,1,2$.

\begin{prop}\label{prop 6}
Given $(L,*,\bb  ,\pp )$ satisfying (1)-(4) as above. Then $(\bar{L},\star)$ is isomorphic to the split para-Cayley algebra.
\end{prop}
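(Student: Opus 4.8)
The plan is to reduce the statement to a reconstruction result over the residue field $\kappa$: since $(R,\calM)$ is local henselian and $u$ is a uniformizer, $\kappa = R\lp u\rp/(\calM,u)$ is a field of characteristic $\ne 2,3$ containing $\xi$, so by the classification of symmetric composition algebras of dimension $8$ over a field (\cite[\S 34]{KMRT}) it suffices to identify $(\bar L,\star)$ with \emph{a} symmetric composition algebra and then pin down \emph{which} one by exhibiting enough of the standard para-Cayley structure. First I would verify that $(\bar L,\star)$ is a symmetric composition algebra at all: reducing conditions (1) and (2) mod $(\calM,u)$ gives an $8$-dimensional $\kappa$-quadratic space $(\bar L,\bar q)$ with $\bar L\star\bar L\subseteq\bar L$, and reducing Lemma \ref{lm 232}, Lemma \ref{lm 233} mod $(\calM,u)$ — note $\rho,\theta$ act trivially on $\kappa$ since $\rho(u)=\xi u\equiv 0$ — turns the twisted identities of Lemma \ref{lm 233}(1) into $\bar x\star(\bar y\star\bar x)=\bar q(\bar x)\bar y=(\bar x\star\bar y)\star\bar x$, which by \cite[Lemma 34.1]{KMRT} is exactly the defining property of a symmetric composition algebra; condition (3) forces $\bar q$ to be isotropic (indeed $q(\bar a)=0$), hence split.

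Next I would reconstruct the standard basis. Using the decomposition $L = R\lp u\rp a\oplus R\lp u\rp(a*a)\oplus L_1\oplus L_2$ established above together with Lemma \ref{lm 515}, the reductions $\bar L_1,\bar L_2$ are totally isotropic $3$-dimensional subspaces of $\bar L$ in perfect duality (by the Remark following Lemma \ref{lm 513}), $\bar f_1 := \bar a$, $\bar f_2 := \overline{a*a}$ span a hyperbolic plane, and the multiplication satisfies $\bar f_1\star\bar f_1=\bar f_2$, $\bar f_2\star\bar f_2=\bar f_1$, $\bar f_1\star\bar f_2=\bar f_2\star\bar f_1=0$, $\bar L_i\star\bar L_i\subseteq\bar L_{i+1}$, $\bar L_1\star\bar L_2\subseteq\kappa\bar f_1$, $\bar L_2\star\bar L_1\subseteq\kappa\bar f_2$. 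This is precisely the Zorn-vector-matrix / Springer decomposition of the split para-Cayley algebra ($V_0 = \kappa f_1\oplus\kappa f_2$, $V_1 = V_0\star f_1$, $V_2 = V_0\star f_2$ in the notation of the introduction). Now I would use condition (4): it says $\bar e = \bar f_1 + \bar f_2$ acts as $-\mathrm{id}$ on $\bar e^\perp = \bar L_1\oplus\bar L_2$ on both sides, i.e. $\bar f_1\star\bar x + \bar f_2\star\bar x = -\bar x$ for all $\bar x\in\bar L_1\oplus\bar L_2$ (and symmetrically on the right). Combined with $\bar t_i$ being the order-$3$ map $-\,(\text{left mult by }f_{i+1})$ on $\bar L_i$ from Lemma \ref{lm 513}(2), one gets that $\bar t_i^3 = -\mathrm{id}$ on $\bar L_i$; since $-1$ has three cube roots in $\kappa$ (namely $-1,-\xi,-\xi^2$), one picks an eigenbasis $\{x_1,x_2,x_3\}$ of $\bar t_1$ on $\bar L_1$ with $\bar t_1(x_j) = -\xi^{\,j-1}x_j$, rescales, and defines $\{y_1,y_2,y_3\}\subset\bar L_2$ to be the dual basis under $\bb\ ,\ \pp$; Lemma \ref{lm 515}(1), Lemma \ref{lm 513}(4) and the associativity identities then compute every product $x_i\star x_j$, $x_i\star y_j$, $y_i\star y_j$ explicitly and, after matching notation ($f_1\leftrightarrow e_4$, $f_2\leftrightarrow e_5$, the $x$'s and $y$'s to $e_1,e_2,e_3,e_6,e_7,e_8$), reproduce Table \ref{tab 1} on the nose.

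The main obstacle I expect is the bookkeeping in the last step: getting the \emph{eigenvectors} of $\bar t_1$ to line up consistently with the eigenvectors of $\bar t_2$ so that the cross products $x_i\star y_j$ land in $\kappa\bar f_1$ with the right scalars (the $0$'s and $\pm 1$'s of Table \ref{tab 1}), rather than merely showing the two algebras are abstractly isomorphic. This is where condition (4) is doing the real work — without it one only knows $\bar t_i^3=\pm\mathrm{id}$ up to a sign ambiguity that would allow a non-split or twisted form — so I would be careful to track how (4) forces the correct sign and hence the split para-Cayley algebra rather than one of its companions. A secondary point to check is that $\bar L_1$ and $\bar L_2$ really have rank exactly $3$ (not less): this follows because $\bb\ ,\ \pp$ is nondegenerate on $\bar L_0$ and $\bar L_0 = \bar L_1\oplus\bar L_2$ with each $\bar L_i$ totally isotropic, forcing $\dim\bar L_i=3$, which I would state explicitly before choosing bases.
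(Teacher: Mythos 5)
Your first paragraph (verifying that $(\bar L,\star)$ is a symmetric composition algebra over $\kappa$ by reducing the twisted identities modulo $(\calM,u)$, and observing that condition (3) forces $\bar q$ to be isotropic hence split) matches the paper's opening step. But after that you and the paper diverge sharply. The paper does not reconstruct the multiplication table of $\bar L$ at all: it simply sets $\bar e=\bar f_1+\bar f_2$, checks $\bar e\star\bar e=\bar e$ from the already-established relations $f_1*f_1=f_2$, $f_2*f_2=f_1$, $f_1*f_2=f_2*f_1=0$, reads off from condition (4) that $\bar e\star\bar x=\bar x\star\bar e=-\bar x$ on $\bar e^\perp$, and then cites the characterization (\cite[Lemma~(34.8)]{KMRT}) of para-Cayley algebras among symmetric composition algebras as exactly those admitting a para-unit. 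That is the entire proof. Your plan of rebuilding the Zorn-vector decomposition and the full multiplication table of $\bar L$ is essentially re-deriving the arguments that appear \emph{after} Proposition~\ref{prop 6}, at the level of $L$ itself (Lemmas~\ref{lm 513}, \ref{lm 515}, \ref{lm 517} and the cocycle argument); this is legitimate but far more work than is needed, and the point of isolating Proposition~\ref{prop 6} is precisely to avoid doing that twice.

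There is also a genuine error in your reconstruction. You claim that condition (4) plus $\bar t_1^3=-\mathrm{id}$ let you choose an eigenbasis $\{x_1,x_2,x_3\}$ of $\bar t_1$ with eigenvalues $-1,-\xi,-\xi^2$. That is not what condition (4) gives. For $\bar x\in\bar L_1$ one has $\bb\bar x,\bar e\pp=0$ (since $L_1\subset L_0\perp f_1,f_2$) and $\bar x\star\bar f_2=0$ (since $L_1*f_2=0$ by the remark after Lemma~\ref{lm 513}), so condition (4) yields $\bar x\star\bar f_1=\bar x\star\bar e=-\bar x$; i.e.\ $\bar t_1=-\mathrm{id}$ identically on $\bar L_1$, not a diagonalizable operator with three \emph{distinct} cube roots of $-1$ as eigenvalues. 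This is precisely the content of Lemma~\ref{lm 517} in the paper (stated after the proposition, but logically it is the same deduction). If you diagonalized $\bar t_1$ with eigenvalues $-1,-\xi,-\xi^2$, the resulting multiplication table would not be Table~\ref{tab 1} — in Table~\ref{tab 1} one has $e_1\star e_4=-e_1$, $e_6\star e_4=-e_6$, $e_7\star e_4=-e_7$, i.e.\ right-multiplication by $f_1$ is $-\mathrm{id}$ on all of $L_1$ — so your construction would produce a different (and incorrect) algebra. The fix is to read condition (4) more carefully as forcing $\bar t_i=-\mathrm{id}$, which in fact is what makes the later Galois-cohomology step in the paper (showing $t_i$ is $\rho$-conjugate to $-\mathrm{id}$) go through.
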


\begin{proof}
It is easy to see $q(\bar{x}\star\bar{y})=q(\bar{x})q(\bar{y})$, and $\bb  \bar{x}\star\bar{y},\bar{z}\pp =\bb  \bar{y}\star \bar{z},\bar{x}\pp $, so $\bar{L}$ is a symmetric composition algebra. By \cite[Lemma (34.8)]{KMRT}, a symmetric algebra is a para-Cayley algebra if and only if it admits a para-unit, i.e., there exist an element $\bar{e}\in \bar{L}$, such that 
\[
\bar{e}\star\bar{e}=\bar{e},\quad \bar{e}\star\bar{x}=\bar{x}\star\bar{e}=-\bar{x},
\]
for all $\bar{x}\in \bar{L}$ satisfying $\bb  \bar{e},\bar{x}\pp =0$. Set $e=f_1+f_2$ in our case. We can see that $e$ is an idempotent element by 
$e\star e=(f_1+f_2)*(f_1+f_2)=f_1+f_2=e$. By condition (4), we get $\bar{e}\star\bar{x}=\bar{x}\star\bar{e}=-\bar{x}$, for all $\bar{x}\in \bar{L}$ satisfying $\bb  \bar{e},\bar{x}\pp =0$. Thus $\bar{e}$ is a para-unit in $\bar{L}$, and $\bar{L}$ is a para-Cayley algebra. It is split since $q$ is an isotropic norm.	
\end{proof}

\begin{lm}\label{lm 517}
For $\bar{t}_i: \bar{L}_i\rightarrow \bar{L}_i$, we have $\bar{t}_i(\bar{x})=\bar{x}\star \bar{f_i}=-\bar{x}$ for any $\bar{x}\in \bar{L}_i$. Then $\bar{L}_i=\bar{L}_0\star \bar{f}_i=\{\bar{x}\in \bar{L}_0 \mid \bar{x}\star\bar{f}_i=-\bar{x}\}$, $i=1,2$.
\end{lm}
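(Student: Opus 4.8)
The plan is to work entirely in the reduction $\bar L = L\otimes_{R\lp u\rp}\kappa$, which by Proposition \ref{prop 6} is the split para-Cayley algebra with para-unit $\bar e = \bar f_1+\bar f_2$. First I would recall from the decomposition $L = R\lp u\rp f_1\oplus R\lp u\rp(f_2)\oplus L_1\oplus L_2$ and the relations $\langle f_1,f_2\rangle = 1$, $q(f_1)=q(f_2)=0$ that $\bar L_i$ sits inside $\bar e^\perp$: indeed for $\bar x\in\bar L_i$ we have $\langle \bar x,\bar e\rangle = \langle \bar x,\bar f_1\rangle + \langle \bar x,\bar f_2\rangle$, and $\langle L_i, f_i\rangle = 0$ (this is the isotropy computation in Lemma \ref{lm512}'s proof, or directly $\langle x*f_i,f_i\rangle = \rho(\langle f_i*f_i,x\rangle)$ with $f_i*f_i = f_{i+1}$ and $\langle f_{i+1},L_0\rangle$ controlled), while $\langle L_i,f_{i+1}\rangle$ also vanishes — this is exactly the observation recorded after Lemma \ref{lm 513}. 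Hence $\langle \bar x,\bar e\rangle = 0$ for every $\bar x\in\bar L_i$.

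**Key step.** Given $\langle\bar x,\bar e\rangle = 0$, the para-unit property from Proposition \ref{prop 6} gives immediately $\bar x\star\bar e = \bar e\star\bar x = -\bar x$. Now I expand $\bar x\star\bar e = \bar x\star(\bar f_1+\bar f_2) = \bar t_1$-type terms: $\bar x\star\bar f_i = \bar t_i(\bar x)$ when $\bar x\in\bar L_i$ — but I must be careful, because a priori $\bar x\star\bar f_1$ for $\bar x\in\bar L_2$ is also a term. The cleanest route: for $\bar x\in\bar L_i$, use that $L_i*f_{i+1} = 0$ (the remark after Lemma \ref{lm 513}), so $\bar x\star\bar f_{i+1} = 0$, whence $-\bar x = \bar x\star\bar e = \bar x\star\bar f_i + \bar x\star\bar f_{i+1} = \bar t_i(\bar x) + 0 = \bar t_i(\bar x)$. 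This proves $\bar t_i(\bar x) = -\bar x$ for all $\bar x\in\bar L_i$, which is the first assertion.

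**Identifying $\bar L_i$.** For the second assertion I would argue by double inclusion. The inclusion $\bar L_i = \bar L_0\star\bar f_i \subseteq \{\bar x\in\bar L_0 \mid \bar x\star\bar f_i = -\bar x\}$ is now immediate from the first assertion together with $\bar L_i\subseteq\bar L_0$ and $\bar L_i = \overline{L_0*f_i}$ (reduce the equality $L_i = L_0*f_i$). For the reverse inclusion, suppose $\bar x\in\bar L_0$ with $\bar x\star\bar f_i = -\bar x$. Write $\bar x = \bar x_i + \bar x_{i+1}$ along $\bar L_0 = \bar L_1\oplus\bar L_2$ (reduction of $L_0 = L_1\oplus L_2$). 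Then $\bar x\star\bar f_i = \bar t_i(\bar x_i) + \bar x_{i+1}\star\bar f_i = -\bar x_i + \bar x_{i+1}\star\bar f_i$, using the first assertion on $\bar x_i\in\bar L_i$. But $\bar x_{i+1}\star\bar f_i\in \bar L_{i+1}\star\bar f_i$; by Lemma \ref{lm 513}(3) reduced, $\bar t_{i+1}$ is an isomorphism of $\bar L_{i+1}$, so $\bar x_{i+1}\star\bar f_i = \bar t_{i+1}(\bar x_{i+1})$ lies in $\bar L_{i+1}$, and it equals $-\bar x_{i+1}$ precisely when... actually more directly: the equation $-\bar x_i + \bar t_{i+1}(\bar x_{i+1}) = -\bar x_i - \bar x_{i+1}$ forces $\bar t_{i+1}(\bar x_{i+1}) = -\bar x_{i+1}$, and comparing components in $\bar L_{i+1}$ gives $\bar t_{i+1}(\bar x_{i+1}) = -\bar x_{i+1}$; but we must also match the $\bar L_i$-components, which are already equal. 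So the constraint is $\bar x_{i+1}\star\bar f_i = -\bar x_{i+1}$, i.e. $\bar t_{i+1}$ restricted suitably sends $\bar x_{i+1}$ to $-\bar x_{i+1}$ — and one checks via $\bar t_{i+1}^3 = -\mathrm{id}$ (Lemma \ref{lm 513}(2)) that this can only happen when $\bar x_{i+1}=0$, since the eigenvalue equation $\bar t_{i+1}(v) = -v$ on $\bar L_{i+1}$ combined with $\bar t_{i+1}^3 = -\mathrm{id}$ gives $-v = \bar t_{i+1}^3(v) = (-1)^3 v = -v$, which is no obstruction — so instead I reconsider: $\bar x_{i+1}\star\bar f_i$ actually lands in $\bar L_i$ by Lemma \ref{lm 515}-type considerations, not $\bar L_{i+1}$; matching the $\bar L_{i+1}$-component of $\bar x\star\bar f_i = -\bar x$ then reads $0 = -\bar x_{i+1}$, giving $\bar x_{i+1}=0$ and $\bar x\in\bar L_i$.

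**Main obstacle.** The delicate point is bookkeeping the homogeneity: tracking in which of $\bar L_0,\bar L_1,\bar L_2$ each product $\bar x_j\star\bar f_i$ lands, which requires the containments from Lemmas \ref{lm512}, \ref{lm 513}, \ref{lm 515} applied over $\kappa$ rather than over $R\lp u\rp$; since those lemmas are stated for $L$ and its sublattices, I would note once that each reduces modulo $(\mathcal{M},u)$ because the bilinear form stays nondegenerate on $\bar L_0$ (as $q$ is the split form) so the direct-sum decomposition $\bar L_0 = \bar L_1\oplus\bar L_2$ persists. With that, the argument above is a finite linear-algebra verification and carries no real difficulty.
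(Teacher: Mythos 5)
Your proof is correct, but it takes a genuinely different route from the paper for the first assertion. The paper proves $\bar{x}\star\bar{f}_i=-\bar{x}$ by passing through the auxiliary unital Hurwitz multiplication $\bar{x}\diamond\bar{y}=(\bar{e}\star\bar{x})\star(\bar{y}\star\bar{e})$ and a Springer identity to get $\bar{x}\star\bar{f}_i+\bar{f}_i\star\bar{x}=r(\bar{x})$, then kills $\bar{f}_i\star\bar{x}$ using $f_i*L_i=0$. You instead apply the para-unit property (Proposition \ref{prop 6}, which encodes hypothesis (4)) directly: for $\bar{x}\in\bar{L}_i\subset\bar{e}^{\perp}$ you have $\bar{x}\star\bar{e}=-\bar{x}$, and expanding $\bar{e}=\bar{f}_1+\bar{f}_2$ by $\kappa$-bilinearity and annihilating the cross term via $L_i*f_{i+1}=0$ gives $\bar{t}_i(\bar{x})=-\bar{x}$. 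That is arguably cleaner: no excursion through the Cayley product or its conjugation is needed. Two smaller comments. First, the fact $\langle\bar{L}_i,\bar{e}\rangle=0$ follows at once from $L_i\subset L_0$ and the definition $L_0=\{x\mid\langle x,f_1\rangle=\langle x,f_2\rangle=0\}$; you need not invoke the remark after Lemma \ref{lm 513}, which records identities about the product $*$, not the pairing. Second, for the reverse inclusion your component-decomposition argument along $\bar{L}_0=\bar{L}_1\oplus\bar{L}_2$ works (after your self-correction that $\bar{x}_{i+1}\star\bar{f}_i=\bar{t}_i(\bar{x}_{i+1})\in\bar{L}_i$, not $\bar{t}_{i+1}(\bar{x}_{i+1})$), but the paper's observation is a one-liner: if $\bar{x}\star\bar{f}_i=-\bar{x}$ with $\bar{x}\in\bar{L}_0$ then $\bar{x}=(-\bar{x})\star\bar{f}_i\in\bar{L}_0\star\bar{f}_i=\bar{L}_i$, and you could adopt that shortcut.
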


\begin{proof}
By \cite[Lemma (34.8)]{KMRT}, we can define $\bar{x}\diamond \bar{y}=(\bar{e}\star\bar{x})\star(\bar{y}\star\bar{e})$ as a unital composition algebra with the identity element $\bar{e}$. We have $\bar{x}\star\bar{y}=r(\bar{x})\diamond r(\bar{y})$, where $r(\bar{x})=\bb  \bar{e},\bar{x}\pp \bar{e}-\bar{x}$ is the conjugation of $\bar{x}$. By \cite[Proposition 1.2.3]{Sp},
\[
\bar{x}\diamond\bar{y}+\bar{y}\diamond \bar{x}-\bb  \bar{x},\bar{e}\pp \bar{y}-\bb  \bar{y},\bar{e}\pp \bar{x}+\bb  \bar{x},\bar{y}\pp \bar{e}=0.
\]
Using $\bar{x}\star\bar{y}=r(\bar{x})\diamond r(\bar{y})$ and $\bb  r(\bar{x}),r(\bar{y})\pp =\bb  \bar{x},\bar{y}\pp $, we obtain
\[
\bar{x}\star\bar{y}+\bar{y}\star\bar{x}=\bb  \bar{e},\bar{x}\pp r(\bar{y})+\bb  \bar{e},\bar{y}\pp r(\bar{x})-\bb  \bar{x},\bar{y}\pp \bar{e}.
\]
Let $\bar{y}=\bar{f}_i$. We get
$\bar{x}\star\bar{f}_i+\bar{f}_i\star\bar{x}=r(\bar{x})$. Therefore, if $\bar{x}\in \bar{L}_0*\bar{f}_i$, we have $\bar{f}_i\star\bar{x}=0$ by $q(\bar{f}_i)=0$, and
\[
\bar{x}\star\bar{f}_i=\bar{x}\star\bar{f}_i+\bar{f}_i\star\bar{x}=\bb  \bar{e},\bar{x}\pp \bar{e}-\bar{x}=-\bar{x}.
\]
This implies $\bar{L}_0\star\bar{f}_i\subset \{\bar{x}\in \bar{L}_0 \mid \bar{x}\star\bar{f}_i=-\bar{x}\}$. It is obvious that $\{\bar{x}\in \bar{L}_0 \mid \bar{x}\star\bar{f}_i=-\bar{x}\}\subset \bar{L}_0\star\bar{f}_i$. Hence we get 
\[
\bar{L}_i=\bar{L}_0\star\bar{f}_i=\{\bar{x}\in \bar{L}_0 \mid \bar{x}\star\bar{f}_i=-\bar{x}\},
\]
and $\bar{t}_i=-id$.
\end{proof}

So far we know $t_i: L_i\rightarrow L_i$ is a $\rho$-linear isomorphism with $t_i^3=-id$, and $\bar{t}_i=-id$. We will use non-abelian Galois cohomology to prove that $t_i$ and $-id$ are the same up to $\rho$-conjugacy. More precisely, if we fix a basis for $L_i\cong R\lp u\rp ^3$ and let $A_i\in \GL_3(R\lp u\rp )$ represent $t_i$, we can find a new basis for $L_i$ with transition matrix $b\in \GL_3(R\lp u\rp )$, such that 
\[
-I=b^{-1}A_i \rho(b).
\]
Let $\Gamma=\{1,\rho,\theta\}$ be the cyclic group. Set $B=\Aut(L_1)=\GL_3(R\lp u\rp )$. Consider the quotient map $R\lp u\rp \rightarrow \kappa$. Since $(R\lp u\rp , (u)), (R,\calM)$ are henselian pairs, we obtain the exact sequence:
\[
1\rightarrow U\rightarrow \GL_3(R\lp u\rp )\rightarrow \GL_3(\kappa)\rightarrow 1
\]
where $U$ is the kernel of $\GL_3(R\lp u\rp )\rightarrow \GL_3(\kappa)$. Here $\Gamma$ acts on $\GL_3(R\lp u\rp )$ by $\rho(u)=u\xi$, and $\Gamma$ acts trivially on $\GL_3(\kappa)$. We obtain the exact sequence of pointed sets:
 \[
 1\rightarrow U^\Gamma\rightarrow \GL_3(R\lp u\rp )^\Gamma \rightarrow \GL_3(\kappa)^\Gamma \rightarrow H^1(\Gamma,U)\rightarrow H^1(\Gamma,\GL_3(R\lp u\rp )\rightarrow H^1(\Gamma,\GL_3(\kappa)).
 \]
by \cite[Proposition 38]{Se}. Since $U$ is a unipotent group over $k\lp u\rp$ with char$(k)\neq 3$, we have $H^1(\Gamma, U)=1$. Hence the only element mapped to the base point of $H^1(\Gamma,\GL_3(k))$ is the base point of $H^1(\Gamma,\GL_3(R\lp u\rp )$, i.e., for any $[a_s]\in H^1(\Gamma,\GL_3(R\lp u\rp ))$ satisfying $[\bar{a}_s]= 1$, we have $[a_s]=1$.

Consider $t_1: L_1\rightarrow L_1$. The subgroup of $\GL_3(R\lp u\rp )$ generated by $t_1$ is $\{1,t_1^2,-id,-t_1,-t_1^2,id\}$ given by $t_1^3=-id$. If we fix the basis and use $A_1$ to represent $t_1$, we get $t_1^2=A_1\rho(A_1)$, $t_1^3=A_1\rho(A_1)\theta(A_1)=-I$. Define a map:
$$
a:\Gamma\rightarrow \GL_3(R\lp u\rp)
$$
given by $\rho\mapsto a_{\rho}=-A_1$. Using $a_{st}=a_s\leftidx^s a_t$, we get $\theta\mapsto a_{\theta}=a_{\rho}\rho(a_{\rho})=A_1\rho(A_1)$, and $1\mapsto a_{1}=I$. Hence the image of  $\Gamma=\{\rho,\theta, 1\}$ is the subgroup $\{t_1^4=-t_1,t_1^8=t_1^2, t_1^{12}=id\}\subset \bb  t\pp$. This is a 1-cocycle. Consider the image $[\bar{a}]$ of $[a]$ under the map
$$
1\rightarrow H^1(\Gamma,\GL_3(R\lp u\rp )\rightarrow H^1(\Gamma,\GL_3(k)).
$$
We get $[\bar{a}_{\rho}]=-[\bar{t}]=1$ by Lemma \ref{lm 517}. Therefore $[a_{\rho}]=1$. In matrix language, there exist $b\in \GL_3(R\lp u\rp )$ such that
$$
I=b^{-1}(-A_1)\rho(b), \quad t_1\sim -id.
$$
We have a similar conclusion for $t_2$.

Using the above we see that there exist a basis $\{x_1,x_2,x_3\}$ for $ L_1$, and a dual basis $\{y_1,y_2,y_3\}$ for $ L_2$, such that $t_1(x_i)=-x_i$, $t_2(y_i)=-y_i$, $\bb  x_i,y_j\pp =\delta_{ij}$. By Lemma \ref{lm 513}, we have 
$$
\begin{array}{l}
	x_i*f_1=-x_i, \quad f_1*x_i=0,\\
	x_i*f_2=0, \quad f_2*x_i=-x_i,\\
	y_i*f_1=0, \quad f_1*y_i=-y_i,\\
	y_i*f_2=-y_i, \quad f_2*y_i=0.
\end{array}
$$
By Lemma \ref{lm 515}, we have
$$
x_i*y_j=-\delta_{ij}f_1, \quad y_i*x_j=- \delta_{ij}f_2.
$$
It remains to calculate the terms in $L_i*L_i$. To approach this goal, we define a wedge product $\wedge: L_i\times L_i\rightarrow L_{i+1}$ given by
$$
u\wedge v:=t_i^{-1}(u)*t_i(v),
$$
for any $u,v \in L_i$. Let $u\in L_1$. It is immediate to get 
\begin{flalign*}
	u\wedge u&=t_1^{-1}(u)*t_1(u)\\
	&=(f_2*u)*(u*f_1)\\
	&=((u*f_1))*u)*f_2\\
	&=f_1*f_2=0
\end{flalign*}
by $\bb  f_2,u*f_1\pp =0, q(u)=0$. By linearizing the equation, we find $u\wedge v=-v\wedge u$ for $u, v\in L_1$. A similar argument can be made for $u,v\in L_2$. Now define a trilinear function $\bb  ~,~ ,~\pp $ on $L_i$ by $\bb  u,v,w\pp :=\bb  u,v\wedge w\pp $. It is an alternating trilinear function since $\bb  u,w,v\pp =\bb  u, w\wedge v\pp =-\bb  u, v\wedge w\pp =-\bb  u,v,w\pp $, and
\begin{flalign*}
	\bb  v,u,w\pp &=\bb  v,u\wedge w\pp \\
	&=\bb  v, t_i^{-1}(u)*t_i(w)\pp \\
	&=\rho(\bb  t_i(w)*v,t_i^{-1}(u)\pp )\\
	&=\bb  t_{i+1}(t_i(w)*v),u\pp \\
	&=\bb  t_i^2(w)*t_i(v),u\pp \\
	&=\bb  w\wedge v,u\pp =-\bb  u, v\wedge w\pp.
\end{flalign*}
We can now calculate the terms in $L_i*L_i$. Consider $x_1*x_2$. We have $\bb  x_1*x_2,x_1\pp =-\bb  x_1*x_2,t_1(x_1)\pp =-\bb  x_1*x_2,x_1*f_1\pp =0$ by $\bb  x_2,f_1\pp =0$. Similarly $\bb  x_1*x_2,x_2\pp =0$. Hence we have $x_1*x_2=b y_3$ for some $b=\bb  x_1*x_2,x_3\pp \in R\lp u\rp$. Multiplying by $y_1$ on the right side, we obtain $(x_1*x_2)*y_1=(b y_3)*y_1$. Since $(x_1*x_2)*y_1+(y_1*x_2)*x_1=\theta(\bb  x_1,y_1\pp )x_2=x_2$, and $y_1*x_2=0$, we have
$$
x_2=\rho(b)(y_3*y_1).
$$
Therefore $b,\rho(b)^{-1}\in R\lp u\rp $, which implies $b\in R\lp u\rp^* $.  Let $b=-1$ (replace $b y_3$ by $-y_3$, and also replace $b^{-1}x_3$ by $-x_3$), We get $x_1*x_2=-y_3$. We can perform similar calculations for the other $x_i*x_j$ and $y_i*y_j$. By using the alternating trilinear form, we obtain

\begin{minipage}{\textwidth}
 \begin{minipage}[t]{0.45\textwidth}
  \centering
     \makeatletter\def\@captype{table}\makeatother\caption{$x_i*x_j$}
       \begin{tabular}{|c|c|c|c|}\hline
	$*$& $x_1$&$x_2$&$x_3$\\
	\hline
	$x_1$& $0$& -$ y_3$ & $ y_2$\\
	\hline
	$x_2$& $ y_3$& 0&-$ y_1$\\
	\hline
	$x_3$& $-y_2$& $ y_1$&0\\
	\hline
	\end{tabular}
  \end{minipage}
  \begin{minipage}[t]{0.45\textwidth}
   \centering
        \makeatletter\def\@captype{table}\makeatother\caption{$y_i*y_j$}
        \begin{tabular}{|c|c|c|c|}\hline
	$*$& $y_1$&$y_2$&$y_3$\\
	\hline
	$y_1$& 0& $- x_3$ & $ x_2$\\
	\hline
	$y_2$& $ x_3$& 0&$-x_1$\\
	\hline
	$y_3$& $- x_2$& $ x_1$&0\\
	\hline
	\end{tabular}
   \end{minipage}
\end{minipage}\\

Therefore, we complete the multiplication table of $L$. By letting $g(e_4)=f_1, g(e_5)=f_2$, and 
\[
\begin{array}{c}
g(e_1)=x_1,\quad g(e_6)=x_2,\quad g(e_7)=x_3,\\
g(e_8)=y_1,\quad g(e_3)=y_2,\quad g(e_2)=y_3.
\end{array}
\]
We obtain $g(e_i)*g(e_j)= g(e_i*e_j)$. So, there exist $g\in L\underline{\mathscr{G}}(R)$ such that $L=g(\LL)$.


 \newpage
  \bibliographystyle{plain}
  \let\itshape\upshape
  \bibliography{ref}

\Addresses   
  \end{document}